\numberwithin{equation}{section}
\theoremstyle{plain}
\newtheorem{Theorem}{Theorem}[section]
\newtheorem{Lemma}[Theorem]{Lemma}
\newtheorem{Cor}[Theorem]{Corollary}
\theoremstyle{remark}
\newtheorem{Rem}[Theorem]{Remark}
\theoremstyle{definition}
\newcommand{\defeq}{\vcentcolon=}
\newcommand{\eqdef}{=\vcentcolon}
\newcommand{\distto}{\stackrel{\mathrm{d}}{\to}}
\newcommand{\N}{\mathbb{N}}
\newcommand{\R}{\mathbb{R}}
\newcommand{\I}{\mathcal{I}}
\newcommand{\imag}{\mathrm{i}}
\newcommand{\Surv}{\mathcal{S}}
\newcommand{\Prob}{\mathbb{P}}
\DeclareMathOperator{\Var}{\mathbb{V}\mathrm{ar}}
\DeclareMathOperator{\sign}{\mathrm{sign}}
\newcommand{\E}{\mathbb{E}}
\newcommand{\F}{\mathcal{F}}
\newcommand{\cZ}{\mathcal{Z}}
\newcommand{\1}{\mathbbm{1}}
\newcommand{\eqdist}{\stackrel{\mathrm{law}}{=}}
\newcommand{\dx}{\mathrm{d} \mathit{x}}
\newcommand{\dProb}{\mathrm{d} \Prob}
\newcommand{\dF}{\mathrm{d} \mathit{F}}
\newcommand{\dL}{\mathrm{d} \mathit{L}}
\newcommand{\tofd}{\overset{{\rm f.d.d.}}{\longrightarrow}}
\newcommand{\eee}{{\rm e}}
\begin{document}

\title[Stable-like fluctuations of Biggins' martingales]{Stable-like fluctuations of Biggins' martingales}
\author{Alexander Iksanov}
\address{Faculty of Computer Science and Cybernetics, Taras Shevchenko National University of Kyiv, 01601 Kyiv, Ukraine}
\email{iksan@univ.kiev.ua}
\author{Konrad Kolesko} \address{Universit\"at Innsbruck, Austria
    and
    Uniwersytet Wroc\l{}awski, Wroc\l{}aw, Poland}\email{kolesko@math.uni.wroc.pl}
\author{Matthias Meiners}
\address{Universit\"at Innsbruck, Austria} \email{matthias.meiners@uibk.ac.at}

\subjclass[2010]{Primary: 60J80. Secondary: 60F05, 60G42}
\keywords{Autoregressive process; Biggins' martingale, branching
random walk; martingale; stable distribution}

\begin{abstract}
Let $(W_n(\theta))_{n \in \N_0}$ be Biggins' martingale
associated with a supercritical branching random walk,
and let $W(\theta)$ be its almost sure limit.
Under a natural condition for the offspring point process in the branching random walk,
we show that if the law of $W_1(\theta)$ belongs to the domain of normal attraction
of an $\alpha$-stable distribution for some $\alpha \in (1,2)$,
then, as $n\to\infty$, there is weak convergence of the tail process
$(W(\theta) - W_{n-k}(\theta))_{k \in \N_0}$,
properly normalized, to a random scale multiple
of a stationary autoregressive process of order one with $\alpha$-stable marginals.
\end{abstract}

\maketitle

\section{Introduction and main result}  \label{sec:Intro and main results}

\subsection{Introduction}

The branching random walk on the real line is a model for the evolution of a population with a spatial component.
It has connections to classical objects of statistical physics such as directed polymers on disordered trees
\cite{Derrida+Spohn:1988} to give just one example;
we refer to the recent lecture notes \cite{Shi:2015} for further examples and references.

Certain nonnegative martingales, the \emph{additive martingales},
are key tools in the description and analysis of
the asymptotic behavior of the branching random walk
such as the spread of particles at typical positions,
see e.g.\;\cite{Biggins:1992}.
These martingales are sometimes called \emph{Biggins' martingales}
in honor of Biggins' seminal contribution \cite{Biggins:1977},
in which conditions for the convergence of these martingales
to nondegenerate limits were found.
It is then natural to ask for the speed of convergence.

In the present paper, we are interested in the rate of convergence of Biggins' martingale
in the case where the martingale at time $1$ has a power tail.
Requiring only minimal assumptions,
we prove convergence of the finite-dimensional distributions
of the tail of Biggins' martingale, suitably normalized,
to a randomly scaled stationary autoregressive process of order one
with stable marginals.

\subsection{Model description}

A (one-dimensional) branching random walk is a particle system on the real line.
At time $n=0$ it consists of one particle, the ancestor, located at the origin.
At time $n=1$ the ancestor produces offspring (the first generation)
the positions of which are given by the points of a point process
$\cZ = \sum_{j=1}^N \delta_{X_j}$ on $\R$.
The number of offspring, $N = \cZ(\R)$, is a random variable
taking values in $\N_0 \cup \{+\infty\} = \{0,1,2,\ldots\} \cup \{+\infty\}$.
At time $n=2$, the individuals of the first generation
produce offspring, the second generation,
with displacements with respect to their mothers' positions
given by independent copies of the point process $\cZ$.
The further generations are formed analogously.

More formally, let $\I=\bigcup_{n\in\N_0}\N^n$ be the set of all
possible individuals. The ancestor label is the empty word
$\varnothing$, its position is $S(\varnothing)=0$. On some
probability space $(\Omega, \F, \Prob)$ let $(\cZ(u))_{u \in \I}$
be a family of independent and identically distributed (i.i.d.)\
copies of the point process $\cZ$. An individual of the $n$th
generation with label $u = u_1\ldots u_n$ and position $S(u)$
produces a random number $N(u)$ of offspring at time $n+1$. The
offspring of the individual $u$ are placed at random locations on
$\R$ given by the positions of the point process
\begin{equation*}
\delta_{S(u)} * \cZ(u) = \sum_{j=1}^{N(u)} \delta_{S(u) + X_j(u)}
\end{equation*}
where $\cZ(u) = \sum_{j=1}^{N(u)} \delta_{X_j(u)}$ and $N(u)$ is
the number of points in $\cZ(u)$. The offspring of the individual
$u$ are enumerated by $uj = u_1 \ldots u_n j$, where
$j=1,\ldots,N(u)$ (if $N(u)<\infty$) or $j=1,2,\ldots$ (if
$N(u)=\infty$), and the positions of the offspring are denoted by
$S(uj)$. No assumptions are imposed on the dependence structure of
the random variables $N(u), X_1(u),X_2(u),\ldots$ for fixed
$u\in\I$. The point process of the positions of the $n$th
generation individuals will be denoted by $\cZ_n$ so that
$\cZ_0=\delta_0$ and
\begin{equation*}
\cZ_{n+1} = \sum_{|u|=n} \sum_{j=1}^{N(u)} \delta_{S(u)+X_j(u)},
\end{equation*}
where here and hereafter, $|u|=n$ means that the sum is taken over
all individuals of the $n$th generation rather than over all
$u\in\N^n$. The sequence of point processes $(\cZ_n)_{n \in \N_0}$
is then called a \emph{branching random walk} (BRW).

We assume throughout that $(\cZ_n)_{n \in \N_0}$ is \emph{supercritical}, i.e., ${\E[N]>1}$.
This implies $\Prob(\Surv)>0$ where $\Surv = \{\cZ_n(\R)>0 \text{ for every } n \in \N_0\}$.
The sequence of generation sizes in the BRW, $(\cZ_n(\R))_{n \in \N_0}$, forms a Galton--Watson process
if $\Prob(N<\infty)=1$.

Consider the Laplace transform of the intensity measure $\mu(\cdot) \defeq \E[\cZ(\cdot)]$ of $\cZ$,
\begin{equation*}    \label{eq:m}
m: \R \to [0,\infty],   \qquad
\theta \mapsto \int_{\R} \eee^{-\theta x} \, \mu(\dx) = \E \bigg[ \int_{\R} \eee^{-\theta x} \,\cZ(\dx)\bigg].
\end{equation*}
We assume that $m(\theta)< \infty$ for some $\theta\in\R$. For
each such $\theta$, let
\begin{equation*}\label{eq:W_n}
W_n(\theta) \defeq     \frac1 {m(\theta)^n} \int_{\R} \eee^{-\theta x}
\,\cZ_n(\dx) = \frac 1 {m(\theta)^n}\sum_{|u|=n} e^{-\theta S(u)}, \quad n\in\N_0.
\end{equation*}
We write $|u|<n$ if $u \in \N^k$ for some $k<n$
and set $\F_n=\sigma(\cZ(u)\colon |u|<n)$,
the $\sigma$-algebra generated by the first $n$ generations.
It is well-known that, for every $\theta$ with $m(\theta)<\infty$,
$(W_n(\theta))_{n \in\N_0}$ forms a nonnegative
martingale with respect to $(\F_n)_{n\in \N_0}$
and thus converges almost surely to a random variable $W(\theta)$
satisfying $\E[W(\theta)] \leq 1$.
This martingale is called \emph{additive} or \emph{Biggins' martingale}.

\subsection{The main result}    \label{subsec:main result}

Next, we introduce an object that appears in our main result. Let
$(U_k)_{k \in \N_0}$ denote a stationary autoregressive process of
order $1$ with parameter $\varphi \in (0,1)$ defined by
\begin{equation}    \label{eq:U_k recursion}
U_k = \varphi U_{k-1} + Q_k,    \quad   k \in \N
\end{equation}
where $U_0$ is independent of the sequence $Q_1,Q_2,\ldots$
of i.i.d.\ random variables which have characteristic function
\begin{equation}    \label{eq:chf of Q_1}
\E\big[e^{\imag t Q_k}\big]
=\exp\!\bigg(\frac{\Gamma(2\!-\!\alpha)}{\alpha-1}c|t|^\alpha\Big(\!\cos\!\Big(\frac{\pi\alpha}{2}\Big)-\imag \sin\!\Big(\frac{\pi\alpha}{2}\Big)\sign(t) \Big) \bigg),\;t \in \R
\end{equation}
for some $c>0$, where $\Gamma(\cdot)$ is the gamma function.
Notice that the $Q_k$ have spectrally positive $\alpha$-stable laws.
Observe that, for $t \in \R$,
\begin{equation}    \label{eq:chf U_0}
\E\big[e^{\imag t U_0}\big]
=
\prod_{j\geq 0}\E\big[e^{\imag \varphi^j tQ_1}\big]
=
\exp\!\bigg(\frac{\Gamma(2\!-\!\alpha)}{\alpha-1}\frac{c|t|^\alpha}{1-\varphi^\alpha}
\Big(\!\cos\!\Big(\frac{\pi\alpha}{2}\Big)- \imag \sin\!\Big(\frac{\pi\alpha}{2}\Big)\sign(t) \Big)\! \bigg).
\end{equation}

Our main result is the following theorem.

\begin{Theorem} \label{Thm:stable fluctuations of Biggins's martingale}
Suppose there exist $\alpha \in (1,2)$ and $c>0$ such that
\begin{equation}    \label{eq:contraction assumption}
\kappa\defeq \tfrac{m(\alpha\theta)}{m(\theta)^\alpha}<1
\end{equation}
and
\begin{equation}    \label{eq:tail assumption W_1}
\Prob(W_1(\theta)>x) \sim cx^{-\alpha}  \quad   \text{as } x \to \infty.
\end{equation}
Further, let $(U_r)_{r \in \N_0}$ be independent of $W(\theta)$
and defined as in \eqref{eq:U_k recursion} with
$\varphi\!=\!\kappa^{1/\alpha}$. Let $c$ in \eqref{eq:chf of Q_1}
be the same as in \eqref{eq:tail assumption W_1}. Then, with
$W_j(\theta)=1$ for $j<0$, we have
\begin{equation}    \label{eq:stable fluctuations of Biggins's martingale}
\big(\kappa^{-(n-r)/\alpha}(W(\theta) -
W_{n-r}(\theta))\big)_{r\in \N_0} \tofd
W(\alpha\theta)^{1/\alpha}(U_r)_{r\in\N_0}      \quad   \text{as }
n \to \infty
\end{equation}
where $\tofd$ denotes convergence of the finite-dimensional distributions.

\end{Theorem}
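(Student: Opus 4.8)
The plan is to exploit the branching (one-step) recursion for Biggins' martingale and iterate it, turning the tail process into an autoregressive-type process with martingale-difference innovations, then pass to the limit.

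\medskip

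\textbf{Step 1: One-step decomposition.} For each $u$ with $|u|=1$, let $W^{(u)}(\theta)$ be the limit of the martingale built from the subtree rooted at $u$; these are i.i.d.\ copies of $W(\theta)$, independent of $\F_1$. The branching property gives, for $n\ge 1$,
\begin{equation*}
W(\theta) - W_n(\theta) = \frac{1}{m(\theta)}\sum_{|u|=1} e^{-\theta S(u)}\bigl(W^{(u)}(\theta) - W^{(u)}_{n-1}(\theta)\bigr),
\end{equation*}
and more generally, after $r$ steps,
\begin{equation*}
W(\theta) - W_{n}(\theta) = \frac{1}{m(\theta)^{r}}\sum_{|u|=r} e^{-\theta S(u)}\bigl(W^{(u)}(\theta) - W^{(u)}_{n-r}(\theta)\bigr).
\end{equation*}
Multiplying by $\kappa^{-(n)/\alpha}$ and writing $D_n(\theta) := \kappa^{-n/\alpha}(W(\theta)-W_{n}(\theta))$, this is the structural identity that will produce the autoregression: separating the $r=1$ layer from the deeper layers, $D_n(\theta)$ equals $\varphi$ times a weighted sum of i.i.d.\ copies of $D_{n-1}(\theta)$ plus a remainder term coming from the first-generation fluctuation $W_1(\theta) - 1$ (and higher-order corrections). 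The coefficient $\varphi = \kappa^{1/\alpha}$ arises precisely because $m(\alpha\theta)/m(\theta)^\alpha = \kappa$ controls the $\alpha$-th moment scaling of the weighted sum.

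\medskip

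\textbf{Step 2: Identify the innovation.} The key input \eqref{eq:tail assumption W_1} says $W_1(\theta)$ is in the domain of normal attraction of a spectrally positive $\alpha$-stable law. Thus $\kappa^{-n/\alpha}\sum_{|u|=n} e^{-\theta S(u)}(W_1^{(u)}(\theta)-1)$, conditionally on the BRW, is a sum of independent centered heavy-tailed terms whose total $\alpha$-th-moment-type scale converges (by the martingale limit theorem applied to $W_n(\alpha\theta)$) to $W(\alpha\theta)$; a conditional stable limit theorem then shows this converges in law to $W(\alpha\theta)^{1/\alpha} Q$ with $Q$ having characteristic function \eqref{eq:chf of Q_1}. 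This is where the normalizing constant $\Gamma(2-\alpha)/(\alpha-1)$ and the choice of $c$ enter. I would establish this conditional convergence carefully — invoking, e.g., the classical triangular-array criterion for stable convergence with the Lindeberg-type conditions reducing to convergence of the truncated variance proxy to $W(\alpha\theta)$ — since the scaling sequence $W_n(\alpha\theta)$ is itself random and only a.s.\ convergent.

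\medskip

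\textbf{Step 3: Assemble the finite-dimensional limit and close the recursion.} To get the joint convergence of $(D_n(\theta), D_{n-1}(\theta), \dots, D_{n-m}(\theta))$, I would argue inductively in $m$: the telescoping identity $W_{n-r}(\theta) - W_{n-r-1}(\theta)$ expands into a first-generation innovation term (asymptotically $\varphi^{?}W(\alpha\theta)^{1/\alpha}$ times a fresh $Q$) plus a contracted copy of the next difference, which is exactly the recursion \eqref{eq:U_k recursion}. The independence of the successive innovations $Q_k$ follows because they live on disjoint generations of the tree and, in the limit, decouple; the stationary initial value $U_0$ with characteristic function \eqref{eq:chf U_0} appears as the limit of $D_n(\theta)$ itself, i.e.\ the "oldest" coordinate, matching the geometric series $\sum_{j\ge 0}\varphi^j Q$. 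The random scale $W(\alpha\theta)^{1/\alpha}$ is common to all coordinates because all the innovation sums are normalized by the same martingale $W_n(\alpha\theta) \to W(\alpha\theta)$, and $(U_r)$ is independent of it since $W(\alpha\theta)$ is $\F_\infty$-measurable while the $Q_k$ emerge as conditionally-independent fluctuations given $\F_\infty$.

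\medskip

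\textbf{Main obstacle.} The hardest part is Step 2 together with controlling the error terms uniformly: one must show that the higher-order corrections in the iterated decomposition (products of fluctuations across several generations, and the discrepancy between $W^{(u)}_{n-r}(\theta)$ and its limit at finite depth) are negligible after normalization by $\kappa^{-n/\alpha}$. Because $\alpha \in (1,2)$, second moments are infinite, so the usual $L^2$ martingale estimates are unavailable; instead I would rely on truncation arguments and $L^p$ bounds for $p < \alpha$, combined with the contraction $\kappa < 1$ to sum the geometric series of error contributions. Making the conditional stable limit theorem rigorous with a random, only almost-surely convergent normalization — rather than a deterministic one — is the technical crux, and will likely require a separate lemma on stable convergence of weighted i.i.d.\ sums along the branching random walk.
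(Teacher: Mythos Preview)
Your proposal follows the heuristic outline of Section~\ref{subsec:Heuristics} rather than the paper's actual proof, and Step~3 contains a genuine gap.

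The claim that successive innovations $Q_k$ ``live on disjoint generations of the tree and, in the limit, decouple'' is not correct as stated: the increments $W_{n+j}-W_{n+j-1}$ and $W_{n+j+1}-W_{n+j}$ share the entire tree up to generation $n+j-1$, so their dependence is nontrivial and does not vanish by a disjointness argument. More seriously, even if you established joint convergence of finitely many normalized increments, you would still have to pass from this to convergence of $D_n=\sum_{j\ge 1}\kappa^{(j-1)/\alpha}\cdot(\text{normalized increment at }n{+}j)$, an \emph{infinite} series. Controlling that tail remainder uniformly in $n$ is exactly the content of Theorem~\ref{Thm:tail of W(theta)} (regular variation of $\sum_{j\ge 0}a_j(W_{j+1}-W_j)$ under \eqref{eq:contraction assumption} and \eqref{eq:tail assumption W_1}), which is the paper's main technical effort; your proposal does not recognize the need for such a result.

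The paper is organized quite differently. Via Cram\'er--Wold, an arbitrary linear combination $\sum_{j=0}^r\beta_j\kappa^{-(n-j)/\alpha}(W-W_{n-j})$ is rewritten by decomposing at the \emph{single} depth $n-r$: it equals $m(\alpha\theta)^{-(n-r)/\alpha}\sum_{|u|=n-r}e^{-\theta S(u)}V^{(u)}$ for one centered random variable $V=\sum_{i\ge 0}a_i(W_{i+1}-W_i)$ with explicit bounded coefficients. Theorem~\ref{Thm:tail of W(theta)} supplies the exact two-sided tail asymptotics of $V$, and then a single conditional triangular-array stable limit (Lemma~\ref{Lem:Theta_n convergence}, essentially your Step~2 but applied to this richer $V$ rather than to $W_1-1$) finishes the argument. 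Thus, instead of proving joint convergence of several correlated increments and summing an infinite series, the paper packages everything into one weighted i.i.d.\ sum whose summand already carries the correct $\alpha$-regular tail; this is what makes the asymptotic independence and the appearance of $U_0$ with law \eqref{eq:chf U_0} come out automatically from the computed constants $c_1,c_2$.
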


\begin{Rem} \label{Rem:trivial limit}
Without further assumptions,
the martingale convergence theorem implies that $W(\theta) \defeq \lim_{n \to \infty} W_n(\theta)$ exists almost surely,
but $\Prob(W(\theta)\!=\!0)=1$ may hold.
However, the assumptions of Theorem \ref{Thm:stable fluctuations of Biggins's martingale}
guarantee $\E[W(\theta)]=1$.
More precisely, notice that $p \mapsto m_\theta(p) \defeq m(p \theta)/m(\theta)^p$ is convex
with $m_\theta(1)=1$ and $m_\theta(\alpha)=\kappa < 1$.
Thus $m_\theta'(1) < 0$, which gives $\theta m'(\theta)/m(\theta) - \log (m(\theta)) \in [-\infty,0)$.
Further, $\E[W_1(\theta) \log^+(W_1(\theta))] < \infty$ is a consequence of \eqref{eq:tail assumption W_1}.
Therefore, the main result of \cite{Lyons:1997} together with the subsequent remark give $\E[W(\theta)]=1$.

On the other hand, the assumptions of our main result do not rule out the case where $\Prob(W(\alpha \theta)\!=\!0)=1$.
In this situation, Theorem~\ref{Thm:stable fluctuations of Biggins's martingale} remains valid,
but the limit process in~\eqref{eq:stable fluctuations of Biggins's martingale} is trivial.
\end{Rem}

Specializing Theorem~\ref{Thm:stable fluctuations of Biggins's
martingale} for $r=0$, we obtain the following one-dimensional
result.

\begin{Cor} \label{Cor:stable fluctuations of Biggins's martingale}
Under the assumptions of Theorem \ref{Thm:stable fluctuations of Biggins's martingale},
\begin{equation*}
\kappa^{-n/\alpha}(W(\theta) - W_n(\theta))
\distto W(\alpha\theta)^{1/\alpha} U_0  \quad   \text{as } n \to \infty
\end{equation*}
where, for $t \in \R$,
\begin{align*}
\E\big[e^{\imag t W(\alpha\theta)^{\frac1\alpha}U_0}\big] =
\E\bigg[\exp\!\bigg(\frac{\Gamma(2\!-\!\alpha)}{\alpha-1}\frac{cW(\alpha\theta)}{1-\kappa}|t|^\alpha\Big(\!\cos
\Big(\frac{\pi\alpha}{2}\Big)-\imag
\sin\!\Big(\frac{\pi\alpha}{2}\Big)\sign(t) \Big) \bigg)\bigg].
\end{align*}
\end{Cor}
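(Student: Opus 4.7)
The statement is the one-dimensional marginal of Theorem~\ref{Thm:stable fluctuations of Biggins's martingale}, so the plan splits into two tasks: read off the convergence in distribution from the finite-dimensional statement, and then identify the characteristic function of the limit.

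For the first task, I would specialize \eqref{eq:stable fluctuations of Biggins's martingale} to $r=0$. Projection onto the zeroth coordinate is continuous, so convergence of finite-dimensional distributions yields convergence of the one-dimensional marginal, namely
\begin{equation*}
\kappa^{-n/\alpha}\bigl(W(\theta) - W_n(\theta)\bigr) \distto W(\alpha\theta)^{1/\alpha}\, U_0 \qquad \text{as } n \to \infty.
\end{equation*}
Note that the $r=0$ case does not require the convention $W_j(\theta)=1$ for $j<0$ that was needed for $r>0$, so the convergence is stated for the natural random variables.

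For the second task, I would use the independence of $U_0$ and $W(\alpha\theta)$ built into Theorem~\ref{Thm:stable fluctuations of Biggins's martingale} and condition on $W(\alpha\theta)$. Writing $w\geq 0$ for a realization of $W(\alpha\theta)$, the conditional characteristic function of $w^{1/\alpha}U_0$ at $t$ equals the characteristic function of $U_0$ at $tw^{1/\alpha}$. Substituting $s=tw^{1/\alpha}$ into the formula \eqref{eq:chf U_0} with $\varphi=\kappa^{1/\alpha}$ (so $1-\varphi^\alpha=1-\kappa$) uses $|s|^\alpha=|t|^\alpha w$ and $\sign(s)=\sign(t)$ on $\{w>0\}$. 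This yields
\begin{equation*}
\E\bigl[e^{\imag t w^{1/\alpha} U_0}\bigr] = \exp\!\bigg(\frac{\Gamma(2-\alpha)}{\alpha-1}\frac{c\,w}{1-\kappa}|t|^\alpha\Big(\cos\Big(\tfrac{\pi\alpha}{2}\Big)-\imag\sin\Big(\tfrac{\pi\alpha}{2}\Big)\sign(t)\Big)\bigg).
\end{equation*}
Taking the outer expectation over $w=W(\alpha\theta)$ then produces the displayed formula for $\E[\exp(\imag t\, W(\alpha\theta)^{1/\alpha}U_0)]$.

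There is essentially no obstacle here: once \eqref{eq:chf U_0} is in hand and the factorization through the conditional expectation is arranged, the result follows from a single substitution. The only minor point to verify is the behavior on $\{W(\alpha\theta)=0\}$ (which may have positive probability by Remark~\ref{Rem:trivial limit}), but there both the random scale and the exponent vanish, so the identity is automatic.
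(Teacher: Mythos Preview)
Your proposal is correct and matches the paper's own treatment: the paper simply states the corollary as the specialization of Theorem~\ref{Thm:stable fluctuations of Biggins's martingale} to $r=0$, and you have spelled out the (routine) verification of the characteristic-function formula by conditioning on $W(\alpha\theta)$ and substituting into \eqref{eq:chf U_0} with $\varphi=\kappa^{1/\alpha}$.
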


The limit distribution in Corollary \ref{Cor:stable fluctuations of Biggins's martingale} is a scale mixture of $\alpha$-stable laws.

\subsection{Related literature} \label{subsec:literature}

Rate of convergence results in the form of a central limit theorem
and a law of the iterated logarithm are given in \cite{Iksanov+Kabluchko:2016},
see also \cite{Hartung+Klimovsky:2017} for a recent interesting contribution in the setting of branching Brownian motion.
There are various earlier results, but here we confine ourselves to referring to \cite[p.\;1182]{Iksanov+Kabluchko:2016}
for a thorough account of the literature.

The counterpart of our Corollary \ref{Cor:stable fluctuations of Biggins's martingale}
for the Galton--Watson process was proved in \cite{Heyde:1971}.
In the setting of weighted branching processes, which includes the branching random walk as a special case,
an analogue of our Corollary \ref{Cor:stable fluctuations of Biggins's martingale} was obtained in
\cite{Roesler+al:2002} (since \cite{Roesler+al:2002} is not easily available
we also refer to the conference paper \cite{Roesler+al:2002b},
which is an abridged version of \cite{Roesler+al:2002})
under the assumption $m((\alpha+\varepsilon)\theta)<\infty$ for some $\varepsilon>0$.
This assumption is not required here.

\subsection{Heuristics} \label{subsec:Heuristics}

We continue with an informal discussion of why Theorem \ref{Thm:stable fluctuations of Biggins's martingale}
should be true. From the representation of $W_{n+j}(\theta) - W_{n+j-1}(\theta)$ as a random weighted sum of i.i.d.\ copies of $W_1(\theta)-1$
and the limit theory for independent, infinitesimal triangular arrays it is plausible that
\begin{align*}
\big(\kappa^{-n/\alpha}(W_{n+j}(\theta) -
W_{n+j-1}(\theta))\big)_{j \in \N}
&=
\Big(\kappa^{(j-1)/\alpha}\frac{W_{n+j}(\theta)-W_{n+j-1}(\theta)}{\kappa^{(n+j-1)/\alpha}}\Big)_{j\in\N}   \\
&\tofd
W(\alpha\theta)^{1/\alpha}(\kappa^{(j-1)/\alpha}Q_j)_{j\in\N}.
\end{align*}
In view of this one may expect that, for fixed $r \in \N$ as $n
\to \infty$,
\begin{align*}
\kappa^{-(n-r)/\alpha}(W(\theta) - W_{n-r}(\theta))
&= \kappa^{-(n-r)/\alpha} \sum_{j \geq 1}(W_{n-r+j}(\theta)-W_{n-r+j-1}(\theta))    \\
&\distto
W(\alpha\theta)^{1/\alpha}\sum_{j\geq 1}\kappa^{(j-1)/\alpha}Q_j
\eqdist W(\alpha\theta)^{1/\alpha} \, U_0.
\end{align*}
Similarly, for $r_1,r_2 \in \N_0$, $r_1<r_2$ one would expect that
\begin{align*}
&\bigg(\frac{W(\theta)-W_{n-r_1}(\theta)}{\kappa^{(n-r_1)/\alpha}}, \frac{W(\theta)-W_{n-r_2}(\theta)}{\kappa^{(n-r_2)/\alpha}}\bigg)   \\
&=
\bigg(\frac{W(\theta)-W_{n-r_1}(\theta)}{\kappa^{(n-r_1)/\alpha}},
\kappa^{(r_2-r_1)/\alpha}\frac{W(\theta)-W_{n-r_1}(\theta)}{\kappa^{(n-r_1)/\alpha}}    \\
&\hphantom{= \bigg(}
+\kappa^{(r_2-r_1-1)/\alpha}\frac{W_{n-r_1} (\theta)-W_{n-r_1-1}(\theta)}{\kappa^{(n-r_1-1)/\alpha}}+\ldots+\frac{W_{n-r_2+1}(\theta)-W_{n-r_2}
(\theta)}{\kappa^{(n-r_2)/\alpha}}\bigg) \\
&\distto W(\alpha\theta)^{1/\alpha} (U_0, \kappa^{(r_2-r_1)/\alpha}U_0+\kappa^{(r_2-r_1-1)/\alpha}Q_1+\ldots+Q_{r_2-r_1})   \\
&= W(\alpha\theta)^{1/\alpha} (U_0,U_{r_2-r_1}) \eqdist
W(\alpha\theta)^{1/\alpha} (U_{r_1}, U_{r_2})
\end{align*}
having utilized the stationarity of $(U_r)_{r\in\N_0}$ for the
last distributional equality.

\subsection{Examples}   \label{subsec:examples}

Typically, our main result applies when the number of offspring $N$ has
a heavy tail while the displacements $X_j$ are `tame'. For instance, if
\begin{equation}\label{rel}
\Prob(N>x)\sim d x^{-\alpha}\quad   \text{as } x \to \infty
\end{equation}
for some $\alpha \in (1,2)$ and $d>0$, and if $X_1,X_2,\ldots$ is a sequence of i.i.d.\
random variables independent of $N$ such that condition \eqref{eq:contraction assumption} holds, that is,
\begin{equation}\label{cond}
\E[e^{-\alpha\theta X_1}]
< (\E[N])^{\alpha-1} (\E[e^{-\theta X_1}])^\alpha < \infty,
\end{equation}
then \eqref{eq:tail assumption W_1} holds according to Proposition
4.3 in \cite{Fay+al:2006}. In particular, condition \eqref{cond} is satisfied for all
sufficiently small $\theta > 0$ if the $X_j$ have a standard normal law.

On the other hand, one may wonder whether there are point processes $\cZ$ with 
infinitely many points satisfying the assumptions \eqref{eq:contraction
assumption} and \eqref{eq:tail assumption W_1} of Theorem
\ref{Thm:stable fluctuations of Biggins's martingale}. In \cite{Iksanov+Polotskiy:2006} it is
demonstrated that \eqref{eq:contraction assumption} and
\eqref{eq:tail assumption W_1} are incompatible if
$N=\cZ(\R)=\infty$ almost surely and $\cZ$ is either an
inhomogeneous Poisson process or a point process with independent
points. Now we show that a slight modification of the example given in the first paragraph of the section leads to a point
process $\cZ$ with $\Prob(N=\infty)=1$ which satisfies the
assumptions of Theorem \ref{Thm:stable fluctuations of Biggins's
martingale}. Let $K$ be a random variable taking positive integer
values with the same tail behavior as in \eqref{rel}. Further, let $Y_1, Y_2, \ldots$ be independent copies of a positive random variable $Y$
such that the sequence $(Y_k)_{k \in \N}$ is independent of $K$.
For some $a>0$ to be specified below, set
\begin{equation*}
X_k \defeq  Y_k \1_{\{K\geq k\}} + ak \1_{\{K<k\}}, \quad   k \in \N.
\end{equation*}
Increasing $d$ if necessary we can assume that $\E[K]>1$
and then pick $\theta>0$ and $a$ such that
\begin{equation*}
m(\theta) = \E \bigg[\sum_{k \geq 1}e^{-\theta X_k}\bigg]
= \E[K] \E\big[e^{-\theta Y}\big]+(1-e^{-\theta a})^{-1} \E\big[e^{-\theta a(K+1)}\big]
=1.
\end{equation*}
This entails
\begin{equation*}
\kappa
=m(\alpha\theta)
=\E\bigg[\sum_{k \geq 1}e^{-\alpha \theta X_k}\bigg]
= \E[K] \E\big[e^{-\alpha \theta Y}\big]+(1\!-\!e^{-\alpha \theta a})^{-1} \E\big[e^{-\alpha \theta a(K+1)}\big]
<1,
\end{equation*}
so that \eqref{eq:contraction assumption} holds.
By Proposition 4.3 in \cite{Fay+al:2006}
\begin{equation*}
\Prob\bigg(\sum_{k\geq 1}e^{-\theta Y_k}\1_{\{K\geq k\}}>x\bigg)
\sim
(\E[e^{-\theta Y}])^\alpha \, d  x^{-\alpha}  \quad   \text{as } x\to\infty.
\end{equation*}
Since $\sum_{k \geq 1} e^{-\theta ak}\1_{\{K<k\}}=(1-e^{-\theta a})^{-1}e^{-\theta a(K+1)}$
is almost surely nonnegative and bounded, we infer
\begin{align*}
\Prob(W_1(\theta)>x)
&=
\Prob\bigg(\sum_{k\geq 1}e^{-\theta Y_k}\1_{\{K\geq k\}}+\sum_{k\geq 1}e^{-\theta ak}\1_{\{K<k\}}>x \bigg)
\sim
(\E[e^{-\theta Y}])^\alpha d x^{-\alpha}
\end{align*}
as $x \to \infty$, that is, \eqref{eq:tail assumption W_1} holds.

\section{Tail behavior in the branching random walk}

An important ingredient in the proof of Theorem \ref{Thm:stable fluctuations of Biggins's martingale} is the following result
on the tail behavior of the martingale $(W_n(\theta))_{n \in \N_0}$,
which we believe is interesting in its own right. As usual, for a real number $x$, we define $x^{\pm} \defeq (\pm x) \vee 0$.

\begin{Theorem} \label{Thm:tail of W(theta)}
Suppose there exist $\alpha \in (1,2)$,
$\varepsilon>0$ and a function $\ell$ slowly varying at $\infty$
such that \eqref{eq:contraction assumption} holds, that
\begin{equation}    \label{eq:m((alpha+epsilon)theta)<infty}
m((\alpha+\varepsilon)\theta)<\infty
\end{equation}
and that
\begin{equation}    \label{eq:W_1(theta) reg varying tail}
\Prob(W_1(\theta)>x)~\sim~ x^{-\alpha}\ell(x)   \quad   \text{as }  x \to \infty.
\end{equation}
Then, for any bounded sequence $(a_j)_{j \in \N_0}$, the series $\sum_{j\geq 0} a_j (W_{j+1}(\theta)-W_j(\theta))$ converges almost surely and in $L_p$ for $p\in [1,\alpha)$. Furthermore, as $x\to\infty$,
\begin{align}
\Prob\bigg(\sum_{j\geq 0} a_j (W_{j+1}(\theta)-W_j(\theta))>x\bigg)
&\sim   \textstyle
\sum_{j\geq 0} \kappa^j(a_j^+)^{\alpha}\, \Prob(W_1(\theta)>x)        \label{eq:tail <grad W,a>}
\end{align}
and
\begin{align}
\Prob\bigg(\sum_{j\geq 0} a_j(W_{j+1}(\theta)-W_j(\theta))<-x\bigg)
&\sim   \textstyle
\sum_{j\geq 0} \kappa^j (a_j^-)^{\alpha}\, \Prob(W_1(\theta)>x).  \label{eq:tail <grad W,a->}
\end{align}
If \eqref{eq:W_1(theta) reg varying tail} holds with $\lim_{x\to\infty}\,\ell(x)=c$ for some $c>0$, that is,
if \eqref{eq:tail assumption W_1} holds, then \eqref{eq:contraction assumption} is
sufficient for \eqref{eq:tail <grad W,a>} and \eqref{eq:tail <grad W,a->} (i.e., \eqref{eq:m((alpha+epsilon)theta)<infty} is
not needed).
\end{Theorem}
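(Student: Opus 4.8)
The plan is to separate the almost sure and $L_p$ convergence, which is soft, from the tail asymptotics, which carry the content.

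\emph{Convergence.} Since $\E[W_{j+1}(\theta)-W_j(\theta)\mid\F_j]=0$, the partial sums $M_n\defeq\sum_{0\le j<n}a_j(W_{j+1}(\theta)-W_j(\theta))$ form an $(\F_n)$-martingale. Writing $W_{j+1}(\theta)-W_j(\theta)=\sum_{|u|=j}L(u)(W_1^{(u)}(\theta)-1)$ with $L(u)\defeq e^{-\theta S(u)}m(\theta)^{-j}$ for $|u|=j$ and $(W_1^{(u)}(\theta))_{|u|=j}$ i.i.d.\ copies of $W_1(\theta)$ independent of $\F_j$, a conditional von Bahr--Esseen inequality and the many-to-one identity $\E[\sum_{|u|=j}L(u)^p]=m_\theta(p)^j$, $m_\theta(p)\defeq m(p\theta)m(\theta)^{-p}$, give $\E|W_{j+1}(\theta)-W_j(\theta)|^p\le C\,m_\theta(p)^j$ for $p\in[1,2]$, where $\E|W_1(\theta)-1|^p<\infty$ for $p<\alpha$ follows from \eqref{eq:W_1(theta) reg varying tail}. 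Convexity of $p\mapsto\log m(p\theta)$, together with $m_\theta(1)=1$ and $m_\theta(\alpha)=\kappa<1$ by \eqref{eq:contraction assumption}, yields $m_\theta(p)<1$ for $p\in(1,\alpha)$; hence, by Burkholder's inequality, $\sup_n\E|M_n|^p<\infty$ for every $p\in(1,\alpha)$, so $M_n$ converges a.s.\ and in $L_p$ for all $p\in[1,\alpha)$ (the endpoint $p=1$ via uniform integrability). Write $Z(\mathbf a)$ for the limit; then also $\E[Z(\mathbf a)]=0$.

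\emph{The key decomposition and lemma.} Peeling off the first generation,
\[
Z(\mathbf a)=a_0\big(W_1(\theta)-1\big)+\sum_{|u|=1}L(u)\,\tilde Z^{(u)}(\vartheta\mathbf a),
\]
where $\vartheta\mathbf a=(a_{j+1})_{j\ge0}$ and, conditionally on $\sigma(\cZ(\varnothing))$, the $\tilde Z^{(u)}(\vartheta\mathbf a)$ ($|u|=1$) are i.i.d.\ copies of $Z(\vartheta\mathbf a)$; the decisive point is that this collection is independent of $\sigma(\cZ(\varnothing))$, hence of the weights $(L(u))_{|u|=1}$ and of $W_1(\theta)=\sum_{|u|=1}L(u)$. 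Using the last identity this becomes $Z(\mathbf a)+a_0=\sum_{|u|=1}L(u)\,\zeta^{(u)}$ with $\zeta^{(u)}\defeq a_0+\tilde Z^{(u)}(\vartheta\mathbf a)$ i.i.d., of mean $a_0$, and independent of the weights. The technical core is then a sharpened randomly weighted sum asymptotics, in the spirit of \cite[Prop.\;4.3]{Fay+al:2006}: if $(\zeta_j)$ are i.i.d.\ with $\E\zeta_j=\mu$, $\Prob(\zeta_j>x)\sim p\,x^{-\alpha}\ell(x)$ and $\Prob(\zeta_j<-x)\sim q\,x^{-\alpha}\ell(x)$, and $(C_j)$ are nonnegative, independent of $(\zeta_j)$, with $\E\big[\sum_jC_j^{\alpha\pm\delta}\big]<\infty$ for some $\delta>0$, then
\[
\Prob\Big(\textstyle\sum_jC_j\zeta_j>x\Big)\;\sim\; p\,\E\Big[\textstyle\sum_jC_j^{\alpha}\Big]x^{-\alpha}\ell(x)\;+\;(\mu^+)^{\alpha}\,\Prob\Big(\textstyle\sum_jC_j>x\Big),
\]
and symmetrically on the left with $p,\mu^+$ replaced by $q,\mu^-$: the first term comes from a single atypically large $\zeta_j$, the second from the total weight being large while the $\zeta_j$ remain typical. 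The hypothesis $\E\big[\sum_{|u|=1}L(u)^{\alpha+\delta}\big]=m_\theta(\alpha+\delta)<\infty$ needed here is precisely where \eqref{eq:m((alpha+epsilon)theta)<infty} is used (with $\delta<\varepsilon$); the lower moment is automatic since $m$ is finite on $[\theta,(\alpha+\varepsilon)\theta]$.

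\emph{From compact to general support.} For compactly supported $\mathbf a$ one obtains \eqref{eq:tail <grad W,a>} and \eqref{eq:tail <grad W,a->} by induction on $N\defeq\max\supp(\mathbf a)$. The base case $N=0$ is the elementary fact that $\Prob(a_0(W_1(\theta)-1)>x)\sim(a_0^+)^\alpha\Prob(W_1(\theta)>x)$, immediate from regular variation; for the inductive step one applies the lemma with $C_j=L(u)$ ($|u|=1$), $\mu=a_0$, $p=\sum_{k\ge0}\kappa^k(a_{k+1}^+)^\alpha$ and $q=\sum_{k\ge0}\kappa^k(a_{k+1}^-)^\alpha$ (induction hypothesis), $\Prob(\sum_jC_j>x)=\Prob(W_1(\theta)>x)$, and $\E[\sum_{|u|=1}L(u)^\alpha]=\kappa$; this reproduces $\Prob(Z(\mathbf a)>x)\sim[(a_0^+)^\alpha+\kappa p]\Prob(W_1(\theta)>x)=\sum_{j\ge0}\kappa^j(a_j^+)^\alpha\Prob(W_1(\theta)>x)$, and likewise on the left. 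To reach general bounded $\mathbf a$ I would first prove an a priori bound $\Prob(Z(\mathbf b)>x)\le C\sum_j\kappa^j(b_j^+)^\alpha\,x^{-\alpha}\ell(x)$ for all large $x$ and all bounded $\mathbf b$: from the full-tree representation $Z(\mathbf b)=\sum_u b_{|u|}L(u)(W_1^{(u)}(\theta)-1)$, truncate each $W_1^{(u)}(\theta)$ at level $\delta x$, bound the large-jump part by a union bound (using Potter's inequalities, the event $\{\max_u L(u)\le x^\beta\}$ for a suitable $\beta<1$, and $\E[\sum_u(b_{|u|}^+L(u))^\alpha]=\sum_j\kappa^j(b_j^+)^\alpha$) and the truncated, recentered part by Markov's inequality at an exponent $p\in(\alpha,\alpha+\varepsilon)$ with $m_\theta(p)<1$. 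One then approximates $\mathbf a$ by $\mathbf a^{(N)}\defeq(a_0,\dots,a_{N-1},0,\dots)$, controls $Z(\mathbf a-\mathbf a^{(N)})$ through $\E|Z(\mathbf a-\mathbf a^{(N)})|^p\le C\|\mathbf a\|_\infty^p m_\theta(p)^N$ together with the a priori bound, and lets $N\to\infty$.

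\emph{The case $\ell(x)\to c$, and the main obstacle.} When $\ell(x)\to c$ the tail of $W_1(\theta)$ is uniformly of order $x^{-\alpha}$, and this substitutes for \eqref{eq:m((alpha+epsilon)theta)<infty}: the a priori bound $\Prob(Z(\mathbf a)>x)\le Cx^{-\alpha}$ can be derived from the distributional recursion by a perpetuity/smoothing-transform argument using only $\kappa<1$ (the truncated variables being bounded, all their moments are at hand), and the weighted-sum asymptotics can be run with this uniform tail of $\sum_{|u|=1}L(u)=W_1(\theta)$ in place of the $(\alpha+\delta)$-moment of the weights. The principal difficulty is the weighted-sum lemma in the sharp form above --- in particular isolating the ``large total weight'' contribution $(\mu^+)^\alpha\Prob(\sum_jC_j>x)$ and proving that it does not interact with the ``single large $\zeta_j$'' contribution --- and, relatedly, making the a priori bound quantitative with the correct constant up to a universal factor; the convergence statement and the passage to general $\mathbf a$ are, by contrast, routine.
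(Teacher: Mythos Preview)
Your proof of a.s.\ and $L_p$ convergence is fine and matches the paper's (they use the Topchii--Vatutin martingale inequality where you use von Bahr--Esseen/Burkholder, but the content is the same). The tail-asymptotics argument, however, takes a genuinely different route from the paper.

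\emph{The paper's approach.} The paper never peels off the first generation. Instead it (i) proves directly, by decomposing into ``$\sup$ large'', ``truncated--centered'', and ``centering'' pieces and using Potter's bounds, a uniform a priori estimate
\[
\frac{\Prob_n\big(|\sum_{j\ge n}a_j(W_{j+1}-W_j)|>x\big)}{1-F(x)}\le C\sum_{j\ge n}|a_j|^{\alpha-\delta}\E_n[\Xi_j],
\]
with $\Xi_j=\sum_{|u|=j}(Y_u^{\alpha-\delta}+Y_u^{\alpha+\delta})$; (ii) proves the asymptotic for \emph{finite} sums $\sum_{j=0}^k a_j(W_{j+1}-W_j)$ by induction on $k$, adding one martingale increment at a time and killing the cross term $\Prob(\text{both summands}>\rho x)$ via the conditional relation $\Prob_{k+1}(W_{k+2}-W_{k+1}>x)\sim\sum_{|u|=k+1}Y_u^\alpha(1-F(x))$ and Pratt's lemma; (iii) sandwiches the infinite sum between finite sums using (i). The extra moment \eqref{eq:m((alpha+epsilon)theta)<infty} enters only through Potter's bounds in (i), and when $\ell\to c$ the Potter inequalities can be replaced by uniform constants, which is how the paper drops \eqref{eq:m((alpha+epsilon)theta)<infty}.

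\emph{Your approach.} You instead run an induction on the support of $\mathbf a$ via the first-generation recursion $Z(\mathbf a)+a_0=\sum_{|u|=1}L(u)\zeta^{(u)}$ and a sharp weighted-sum lemma with two contributions (single large $\zeta$ versus large total weight $\sum_j C_j$). This is more structural and, if the lemma is in hand, the induction is clean. But the lemma in the form you need is strictly stronger than \cite[Prop.~4.3]{Fay+al:2006}, which concerns a \emph{random number} of i.i.d.\ summands (unit weights), not genuinely random weights whose sum $\sum_j C_j=W_1(\theta)$ already has a regularly varying tail of the \emph{same} index as the $\zeta_j$; isolating the $(\mu^+)^\alpha\Prob(\sum_j C_j>x)$ term and proving non-interaction is exactly the nontrivial part, and I do not see it in the references you cite (Olvera-Cravioto's results give the ``single large $\zeta$'' term under moment conditions on the weights, but not the second term). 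You correctly flag this as the main obstacle; note that the paper sidesteps it entirely, because in its additive induction the ``new'' piece $a_{k+1}(W_{k+2}-W_{k+1})$ has mean zero conditionally on $\F_{k+1}$, so no analogue of your $(\mu^+)^\alpha$ term ever appears and the cross term is handled by a short Pratt's-lemma argument.

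Two smaller points. First, your claimed a priori bound with the exact factor $\sum_j\kappa^j(b_j^+)^\alpha$ is sharper than what is needed and likely not what your truncation argument yields; the paper only gets (and only uses) a bound with $|a_j|^{\alpha-\delta}\big(m_\theta(\alpha-\delta)^j+m_\theta(\alpha+\delta)^j\big)$, which already suffices to pass from compact to general support. Second, your sketch for dropping \eqref{eq:m((alpha+epsilon)theta)<infty} via a smoothing-transform tail bound under $\ell\to c$ is plausible, but the paper's mechanism is much simpler: just replace the Potter inequalities by the trivial uniform bounds available when $\ell$ has a positive finite limit.
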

\begin{Rem}
Since $W_0(\theta)=1$ almost surely, \eqref{eq:tail <grad W,a>} with $a_j=1$ for $j\in\N_0$ yields
\begin{equation}    \label{eq:tail W}
\Prob(W(\theta)>x)
\sim (1-\kappa)^{-1} \Prob(W_1(\theta)>x)   \quad   \text{as } x \to \infty.
\end{equation}
This relation can be found in earlier literature in various guises.
If $\Prob(N < \infty)=1$, then $(W_n(0))_{n \in \N_0}$ is a
supercritical normalized Galton-Watson process.
In this case, \eqref{eq:tail W} was proved in \cite{Bingham+Doney:1974} for non-integer
$\alpha>1$ and in \cite{deMeyer:1982} for integer $\alpha\geq 2$.
If $\theta>0$, $\Prob(N<\infty)=1$ and
$\cZ((-\infty,-\theta^{-1}\log m(\theta)))=0$ almost surely,
$W(\theta)$ can be viewed as a limit random variable in the
Crump-Mode branching process. In this case, \eqref{eq:tail W} was
obtained in \cite{Bingham+Doney:1975} for non-integer $\alpha>1$.
In the setting of the branching random walks a proof of relation
\eqref{eq:tail W} was sketched in \cite{Liang+Liu:2011}.
A complete proof for non-integer $\alpha>1$ along similar lines was given in 2003 in
an unpublished diploma paper of Polotskiy (Kyiv).
The techniques exploited in the aforementioned works are based on Laplace-Stieltjes transforms
and Abelian and Tauberian theorems.
In the more general setting of weighted branching processes limit
theorems for triangular arrays were exploited in
\cite{Roesler+al:2002} to prove \eqref{eq:tail W} under the
extra assumption that the positions of the first generation
individuals are almost surely bounded. An alternative probabilistic proof
of \eqref{eq:tail W} based on martingale theory was given in
\cite{Iksanov+Polotskiy:2006}. Unfortunately, this proof is flawed,
and one purpose of the present paper is to give a correct
probabilistic proof of \eqref{eq:tail W} under optimal assumptions.
\end{Rem}

The rest of the paper is organized as follows. Theorems
\ref{Thm:tail of W(theta)} and \ref{Thm:stable fluctuations of
Biggins's martingale} are proved in Sections \ref{sec:tail of
W(theta)} and \ref{sec:stable fluctuations of Biggins's
martingale}, respectively. 

\section{Proof of Theorem~\ref{Thm:tail of W(theta)}}\label{sec:tail of W(theta)}
Henceforth, we shall abbreviate $W_n(\theta)$ and $W(\theta)$ by $W_n$ and $W$, respectively.
Set $Y_u \defeq e^{-\theta S(u)}/m^{|u|}(\theta)$ for $u\in \I$, so that
$W_n=\sum_{|u|=n}Y_u$ for $n\in \N_0$.

Under the assumptions of Theorem \ref{Thm:tail of W(theta)},
the function $m_\theta(p) \defeq \E\big[\sum_{|u|=1} Y_u^p\big]$ is log-convex on $(1,\alpha)$,
$m_\theta(1)=1$ and $m_\theta(\alpha) = \kappa < 1$. Hence,
$m_\theta(p) < 1$ for all $p \in (1,\alpha)$.
We can thus choose $\delta \in (0,\alpha-1)$ such that
$m_\theta(\alpha+\delta)<1$ and further
\begin{equation}    \label{eq:m_theta(alpha+-delta)}
\E \bigg[\sum_{|u|=n}Y_u^{\alpha-\delta}\bigg] = m_\theta(\alpha-\delta)^n < 1
\quad   \text{and}  \quad
\E \bigg[\sum_{|u|=n}Y_u^{\alpha+\delta}\bigg] = m_\theta(\alpha+\delta)^n <1.
\end{equation}
The second inequality in \eqref{eq:m_theta(alpha+-delta)} implies in particular that
\begin{equation}    \label{eq:sum_u Y_u^p < infty}
\sum_{|u|=n} Y_u^p < \infty \quad   \text{a.\,s.}
\end{equation}
for all $p \in [1,\alpha+\delta]$.

For $k \in \N_0$, the random variable $W_k$ is a function of the
family $(\cZ_v)_{v \in \I}$. For any $u \in \I$, we define
$W_k^{(u)}$ to be the same function applied to the family
$(\cZ_{uv})_{v \in \I}$, and $W^{(u)} \defeq \lim_{k \to \infty}
W_k^{(u)}$ a.s. We shall use the decomposition
\begin{equation*}
W_{n+1}-W_n = \sum_{|u|=n} Y_u (W_1^{(u)}-1).
\end{equation*}
Observe that the $Y_u$, $|u|=n$ are
$\F_n$-measurable, whereas the $W_1^{(u)}$, $|u|=n$ are i.i.d., independent of $\F_n$
and have the same law as $W_1$.
In what follows, we write $\Prob_n(\cdot)$ and $\E_n[\cdot]$
for $\Prob(\cdot | \F_n)$ and $\E[ \cdot | \F_n]$, respectively,
and set $F(x) \defeq \Prob(|W_1-1 | \leq x)$, $x\in\R$.

Put $R_n\defeq \sum_{j=0}^n a_j (W_{j+1}-W_j)$ for $n\in \N_0$. The sequence $(R_n, \F_{n+1})_{n\in\N_0}$ is a martingale. To ensure that the martingale converges a.s.\ and in $L_p$ for $p\in (1,\alpha)$ it suffices to show that it is $L_p$-bounded. The $L_p$-boundedness follows from 
\begin{eqnarray*}
\sup_{n\geq 0}\E [|R_n|^p]
&\leq&
4\sum_{n\geq 1}\E[|R_n-R_{n-1}|^p]+\E[|R_0|^p] \leq 4 \sum_{n \geq 0}|a_n|^p \E[|W_{n+1}-W_n|^p]	\\
&=&
4\sum_{n\geq 0} |a_n|^p \E \Big[\E_n \Big|\sum_{|u|=n}Y_u(W_1^{(u)}-1)\Big|^p\Big]	\\
&\leq&
16\E[|W_1-1|^p] \sum_{n \geq 0} |a_n|^p m_\theta(p)^n<\infty 
\end{eqnarray*}
where the first and third inequalities are obtained with the help of the Topchii-Vatutin inequality for martingales
\cite[Theorem 2]{Topchii+Vatutin:1997}, and $\E[|W_1-1|^p]<\infty$ is a consequence of \eqref{eq:W_1(theta) reg varying tail}.

Throughout the rest of this section we assume, without loss of generality, that $\sup_{j\geq 0} |a_j| \leq 1$. Passing to the proof of \eqref{eq:tail <grad W,a>} we first show that there exists some $x_0>0$
that does not depend on $n$ such that for all $x \geq x_0$, we have
\begin{equation}    \label{eq:basic tail estimate <grad W,a> under P_n uniform}
\frac{\Prob_n(|\sum_{j \geq n} a_j (W_{j+1}-W_j)|>x)}{1-F(x)}
\leq C \sum_{j \geq n} |a_j|^{\alpha-\delta} \E_n[\Xi_j]\quad \text{a.\,s.}
\end{equation}
where $C$ is a finite, deterministic constant that does not depend on $n$ or $x_0$ and
\begin{equation} \label{eq:Xi_n}
\Xi_n =
\sum_{|u|=n}Y_u^{\alpha-\delta}+\sum_{|u|=n}Y_u^{\alpha+\delta}
\end{equation}
for some $\delta$ satisfying \eqref{eq:m_theta(alpha+-delta)}.
Note that
\begin{equation}    \label{eq:E[Xi_n]}
\E[\Xi_n] = m_\theta(\alpha-\delta)^n + m_\theta(\alpha+\delta)^n < \infty
\quad   \text{and}  \quad
\E\bigg[\sum_{n\geq 0} \Xi_n\bigg] < \infty.
\end{equation}
For typographical ease, set $Q\defeq W_1-1$, $Q_u\defeq W_1^{(u)}-1$ and $Y_{u,a}\defeq a_{|u|}Y_u$.
For any fixed $n\in \N_0$ and $x>0$, we infer
\begin{align*}
&\Prob_n\bigg(\Big|\sum_{j \geq n} a_j(W_{j+1}-W_j)\Big|>x\bigg)    \\
&~=
\Prob_n\bigg(\Big|\sum_{|u| \geq n}Y_{u,a}Q_u\Big|>x,\, \sup_{|u| \geq n}|Y_{u,a}Q_u|>x \bigg)  \\
&\hphantom{~=~}+
\Prob_n\bigg(\Big|\sum_{|u| \geq n}Y_{u,a}Q_u\Big|>x,\, \sup_{|u| \geq n} |Y_{u,a}Q_u| \leq x \bigg)    \\
&~\leq
\Prob_n\bigg(\sup_{|u| \geq n}|Y_{u,a}Q_u| >x\bigg)+\Prob_n\bigg(\Big|\sum_{|u| \geq n}Y_{u,a} Q_u \1_{\{|Y_{u,a}Q_u|\leq x\}}\Big|>x\bigg).
\end{align*}
We set $\Prob_n\big(\sup_{|u| \geq n}|Y_{u,a}Q_u| >x\big) \eqdef I_1(n,x)$ and
\begin{align*}
\Prob_n&\bigg(\Big|\sum_{|u| \geq n}Y_{u,a} Q_u \1_{\{|Y_{u,a}Q_u|\leq x\}}\Big|>x\bigg)    \\
&=~
\Prob_n\bigg(\Big|\sum_{|u| \geq n}\big(Y_{u,a} Q_u \1_{\{|Y_{u,a}Q_u|\leq x\}}-\E_{|u|}[Y_{u,a}Q_u\1_{\{|Y_{u,a}Q_u|\leq x\}}]\big)\Big|>\frac x2 \bigg)   \\
&\hphantom{=~}
+ \Prob_n\bigg(\Big|\sum_{|u| \geq n}\E_{|u|}[Y_{u,a}Q_u\1_{\{|Y_{u,a}Q_u|\leq x\}}]\Big|>\frac x2\bigg)
\eqdef  I_2(n,x)+I_3(n,x).
\end{align*}
Put $T(x)\defeq \int_{[0,\,x]} y^2 \, \dF(y)$ and $R(x)\defeq \int_{(x,\infty)} y \, \dF(y)$ for $x>0$.
By Karamata's theorem (Theorems 1.6.4 and 1.6.5 in \cite{Bingham+Goldie+Teugels:1989})
\begin{equation*}   \textstyle
T(x) \sim \frac{\alpha}{2-\alpha} x^2(1-F(x)) \sim \frac{\alpha}{2-\alpha}x^{2-\alpha}\ell(x)
\end{equation*}
and
\begin{equation*}   \textstyle
R(x) \sim \frac{\alpha}{\alpha-1} x(1-F(x)) \sim \frac{\alpha}{\alpha-1}x^{1-\alpha}\ell(x)
\end{equation*}
as $x \to \infty$.
For any $A>0$
and $\delta>0$ satisfying \eqref{eq:m_theta(alpha+-delta)},
there exists $x_0>0$ such that, whenever $x \geq x_0$, we have
\begin{align}
x^{\alpha+\delta}(1-F(x))   &\geq   1/A;    \label{eq:A1}   \\
x^{\alpha-2+\delta}T(x) &\geq   1/A ;   \label{eq:A2}   \\
x^{\alpha-1+\delta}R(x) &\geq   1/A ;   \label{eq:A3}   \\
T(x)    &\leq \textstyle
\big(A +\frac{\alpha}{2-\alpha}\big)x^2(1-F(x))\defeq B_1x^2(1-F(x));   \label{eq:A4}   \\
R(x)    &\leq \textstyle
\big(A +\frac{\alpha}{\alpha-1}\big)x(1-F(x))\defeq B_2x(1-F(x)).   \label{eq:A5}
\end{align}
Also, $x_0$ can be chosen so large that (with the same $\delta$ as before)
whenever $x \wedge (u x) \geq x_0$, we have
\begin{align}
\tfrac{1-F(ux)}{1-F(x)} &\leq A  (u^{-\alpha+\delta} \vee u^{-\alpha-\delta});  \label{eq:A6}   \\
\tfrac{T(ux)}{T(x)}     &\leq A  (u^{2-\alpha+\delta} \vee u^{2-\alpha-\delta});    \label{eq:A7}   \\
\tfrac{R(ux)}{R(x)}     &\leq A (u^{1-\alpha+\delta} \vee u^{1-\alpha-\delta}). \label{eq:A8}
\end{align}
Inequalities \eqref{eq:A6} through \eqref{eq:A8} follow from
Potter's bound (Theorem 1.5.6(iii) in \cite{Bingham+Goldie+Teugels:1989}).
While constructing bounds for $I_1$, $I_2$ and $I_3$ below we tacitly assume that $x\geq x_0$.  \smallskip

\noindent {\sc A bound for $I_1$}.
Write
\begin{align*}
\frac{I_1(n,x)}{1-F(x)}
&= \frac{\Prob_n\big(\sup_{|u| \geq n}|Y_{u,a}Q_u| >x\big)}{1-F(x)}
\leq \E_n\bigg[ \sum_{|u| \geq n} \frac{\Prob_{|u|}(|Y_{u,a}Q_u| >x)}{1-F(x)}\bigg]\\
&= \E_n\bigg[\sum_{|u| \geq n} \frac{1-F(x/Y_{u,a})}{1-F(x)}\bigg]
=\E_n\bigg[\sum_{|u| \geq n} \frac{1-F(x/Y_{u,a})}{1-F(x)} \1_{\{|Y_{u,a}|>x/x_0\}}\bigg]\\
&\hphantom{~=~}+\E_n\bigg[\sum_{|u| \geq n} \frac{1-F(x/Y_{u,a})}{1-F(x)} \1_{\{|Y_{u,a}| \leq x/x_0\}}\bigg] \eqdef I_{11}(n,x)+I_{12}(n,x).
\end{align*}
For $|u|\ge n$, we have
\begin{equation*}
|Y_{u,a}|^{\alpha+\delta}
\geq |Y_{u,a}|^{\alpha+\delta}\1_{\{|Y_{u,a}|>x/x_0\}}
\geq (x/x_0)^{\alpha+\delta}\1_{\{|Y_{u,a}|>x/x_0\}}.
\end{equation*}
From this, we conclude that
\begin{equation*}
I_{11}(n,x)
\leq x_0^{\alpha+\delta} \E_n\bigg[\frac{\sum_{|u| \geq n}|Y_{u,a}|^{\alpha+\delta}}{x^{\alpha+\delta}(1-F(x))}\bigg]
\leq A x_0^{\alpha+\delta}\E_n \bigg[\sum_{|u| \geq n}|Y_{u,a}|^{\alpha+\delta}\bigg]
\end{equation*}
by \eqref{eq:A1}.
Further, we obtain with the help of \eqref{eq:A6}
\begin{align*}
I_{12}(n,x)
&\leq   A \E_n\bigg[\sum_{|u| \geq n}\left(|Y_{u,a}|^{\alpha-\delta}\vee |Y_{u,a}|^{\alpha+\delta}\right)\bigg]
\leq    A \E_n\bigg[\sum_{j \geq n}|a_j|^{\alpha-\delta} \Xi_j \bigg].
\end{align*}

\noindent {\sc A bound for $I_2$}.
By Markov's inequality
\begin{align*}
(x/2)^2I_2(n,x)
&\leq
\E_n \bigg[\bigg(\sum_{|u| \geq n}\big(Y_{u,a} Q_u \1_{\{|Y_{u,a}Q_u| \leq x\}}-\E_{|u|}[Y_{u,a} Q_u \1_{\{|Y_{u,a}Q_u| \leq x\}}]\big)\bigg)^2\bigg]   \\
&\leq
\E_n\bigg[\sum_{|u| \geq n}(Y_{u,a})^2Q_u^2\1_{\{|Y_{u,a}Q_u|\leq x\}}\bigg],
\end{align*}
as the expectations of the cross terms vanish.
By virtue of \eqref{eq:A4} we get
\begin{align*}
&\frac{I_2(n,x)}{4(1-F(x))}\leq
\E_n\bigg[\sum_{|u| \geq n}\frac{Y_{u,a}^2\int_0^{x/|Y_{u,a}|} y^2 \, \dF(y)}{x^2(1-F(x))}\bigg]
\leq B_1\E_n\bigg[\sum_{|u| \geq n} \frac{Y_{u,a}^2T(x/|Y_{u,a}|)}{T(x)}\bigg]  \\
&\qquad= B_1\E_n\bigg[\sum_{|u| \geq n} \frac{Y_{u,a}^2T(\frac{x}{|Y_{u,a}|})}{T(x)} \1_{\{|Y_{u,a}|> \frac{x}{x_0}\}}
+\sum_{|u| \geq n} \frac{Y_{u,a}^2T(\frac{x}{|Y_{u,a}|})}{T(x)} \1_{\{|Y_{u,a}| \leq \frac{x}{x_0}\}}\bigg] \\
&\qquad\eqdef   B_1(I_{21}(n,x)+I_{22}(n,x)).
\end{align*}
We use \eqref{eq:A2} and the trivial inequality $T(x)\leq x^2$ for
$x\geq 0$ to obtain
\begin{align*} I_{21}&(n,x)=
\E_n\bigg[\sum_{|u| \geq n}\frac{|Y_{u,a}|^{\alpha+\delta}(x/|Y_{u,a}|)^{\alpha-2+\delta}T(x/|Y_{u,a}|)}{x^{\alpha-2+\delta}
T(x)}\1_{\{Y_{u,a}> x/x_0\}}\bigg]\\&\leq A \max_{y\in
[0,x_0]}(y^{\alpha-2+\delta}T(y))\E_n \bigg[\sum_{|u| \geq n}|Y_{u,a}|^{\alpha+\delta}\bigg]\leq
A  x_0^{\alpha+\delta}\E_n\bigg[\sum_{|u| \geq n}|Y_{u,a}|^{\alpha+\delta}\bigg].
\end{align*}
Further, as a consequence of \eqref{eq:A7},
\begin{align*}I_{22}(n,x)&\leq A  \E_n \bigg[\sum_{|u| \geq n}Y_{u,a}^2(|Y_{u,a}|^{\alpha-2-\delta}\vee
|Y_{u,a}|^{\alpha-2+\delta})\bigg]
\leq A \E_n\bigg[\sum_{j \geq n}|a_j|^{\alpha-\delta} \Xi_j \bigg].
\end{align*}

\noindent {\sc A bound for $I_3$}.
We first observe that for $|u|\ge n$
\begin{align*}
\E_n[Y_{u,a}Q_u\1_{\{|Y_{u,a}Q_u|\leq x\}}]
&= \E_n \bigg[Y_{u,a} \int_{\{|y|\leq x/|Y_{u,a}|\}} y \, \dProb(Q \leq y)\bigg]\\
&=-\E_n\bigg[Y_{u,a}\int_{\{|y|>x/|Y_{u,a}|\}}y \, \dProb(Q\leq y)\bigg]
\end{align*}
whence
\begin{equation*}
|\E_n[Y_{u,a}Q_u\1_{\{|Y_{u,a}Q_u|\leq x\}}]|
\leq \E_n \bigg[|Y_{u,a}|\! \int_{(x/|Y_{u,a}|,\infty)} \!\!\!\!\! y \, \dF(y) \bigg]
=\E_n \Big[|Y_{u,a}| R(x/|Y_{u,a}|)\Big].
\end{equation*}
Consequently, by Markov's inequality and \eqref{eq:A5},
\begin{align*}
\frac{I_3(n,x)}{2(1-F(x))}
&\leq   \E_n \bigg[\sum_{|u| \geq n}\frac{|Y_{u,a}| R(x/|Y_{u,a}|)}{x(1-F(x))}\bigg]
\leq B_2\E_n \bigg[\sum_{|u| \geq n}\frac{|Y_{u,a}|R(x/|Y_{u,a}|)}{R(x)}\bigg]  \\
&=B_2\E_n\bigg[\sum_{|u| \geq n} \frac{|Y_{u,a}|R(x/|Y_{u,a}|)}{R(x)} \1_{\{|Y_{u,a}|> x/x_0\}}\\
&\phantom{=B_2\E_n\bigg[}+\sum_{|u| \geq n} \frac{|Y_{u,a}|R(x/|Y_{u,a}|)}{R(x)} \1_{\{|Y_{u,a}|\leq x/x_0\}}\bigg]\\
&\eqdef B_2(I_{31}(n,x)+I_{32}(n,x)).
\end{align*}
Using \eqref{eq:A3} and the fact that $R(x)$ is nonincreasing we
conclude that
\begin{align*}
I_{31}(n,x)
&= \E_n\bigg[ \sum_{|u| \geq n}\frac{|Y_{u,a}|^{\alpha+\delta}_u(x/|Y_{u,a}|)^{\alpha-1+\delta}R(x/|Y_{u,a}|)}{x^{\alpha-1+\delta} R(x)}\1_{\{|Y_{u,a}|> x/x_0\}}\bigg] \\
&\leq A \E_n \bigg[\max_{y\in [0,x_0]}(y^{\alpha-1+\delta}R(y))\sum_{|u| \geq n}|Y_{u,a}|^{\alpha+\delta}\bigg] \\
&\leq A  \, \E[|W_1-1|] x_0^{\alpha-1+\delta} \E_n\bigg[\sum_{|u| \geq n}|Y_{u,a}|^{\alpha+\delta}\bigg].
\end{align*}
Finally, by \eqref{eq:A8},
\begin{align*}
I_{32}(n,x) &\leq A  \E_n \bigg[\sum_{|u| \geq n}|Y_{u,a}|\Big(|Y_{u,a}|^{\alpha-1-\delta}\vee |Y_{u,a}|^{\alpha-1+\delta}\Big)\bigg]
\\&\leq A \E_n \bigg[\sum_{|u| \geq n}|Y_{u,a}|^{\alpha-\delta}+\sum_{|u| \geq n}|Y_{u,a}|^{\alpha+\delta}\bigg].
\end{align*}
The preceding inequalities imply \eqref{eq:basic tail estimate <grad W,a> under P_n uniform} with $\Xi_k$ as defined in \eqref{eq:Xi_n}.\smallskip

Now some preparatory work has to be done for the next part of the proof. Since $\E[W_1-1]=0$, $\Prob(|W_1-1| > x) \sim x^{-\alpha} \ell(x)$ by \eqref{eq:W_1(theta) reg varying tail},
$\Prob(W_1-1<-x)=0$ for $x>1$ and $\sum_{|u|=n} Y_u^{\alpha-\delta}<\infty$ a.\,s.\
for any $n \in \N$ as a consequence of \eqref{eq:sum_u Y_u^p < infty},
Lemma A.3 in \cite{Mikosch+Samorodnitsky:2000} or Theorem 2.2 in \cite{Kokoszka+Taqqu:1996} give that,
as $x \to \infty$,
\begin{equation}    \label{eq:basic tail estimate W_n+1-W_n under P_n}  \textstyle
\Prob_n(W_{n+1}-W_n>x) \sim \sum_{|u|=n}Y_u^\alpha (1-F(x))
\quad   \text{a.\,s.}
\end{equation}
This in combination with \eqref{eq:basic tail estimate <grad W,a> under P_n uniform} and Lebesgue's dominated convergence theorem enables us to conclude that, as $x \to \infty$,
\begin{equation}    \label{eq:basic tail estimate W_n+1-W_n under P}
\Prob(W_{n+1}-W_n>x)\sim    m_\theta(\alpha)^n (1-F(x)),
\quad n \in \N_0.
\end{equation}
Alternatively, using an inductive argument relation \eqref{eq:basic tail estimate W_n+1-W_n under P} can be deduced from
Theorem 2.1 in \cite{Olvera-Cravioto:2012} and the remark following Theorem 2.2 in \cite{Olvera-Cravioto:2012}.

We are ready to finish the proof of \eqref{eq:tail <grad W,a>}. We claim that
\begin{equation}
\Prob\bigg(\sum_{j=0}^k a_j (W_{j+1}(\theta)-W_j(\theta))>x\bigg)\sim
\sum_{j=0}^k \kappa^j(a_j^+)^{\alpha}\, \Prob(W_1(\theta)>x)        \label{eq:tail <grad W,a>red}
\end{equation}
for $k\in\N_0$. This will be proved by induction on $k$.

\noindent
For $k=0$, \eqref{eq:tail <grad W,a>red} is \eqref{eq:W_1(theta) reg varying tail}, which is an assumption.

\noindent
Now suppose that \eqref{eq:tail <grad W,a>red} holds for fixed $k\in\N$.
Then, for $x>0$ and $\rho \in (0,1)$,
\begin{align*}
\Prob&\bigg(\sum_{j=0}^{k+1}a_j(W_{j+1}-W_j)>x\bigg)    \\
&\leq
\Prob\bigg(\sum_{j=0}^k a_j(W_{j+1}-W_j)>(1-\rho)x\bigg)+\Prob\big(a_{k+1}(W_{k+2}-W_{k+1})>(1-\rho)x\big) \\
&\hphantom{\leq~}+\Prob\bigg(\sum_{j=0}^{k}a_j(W_{j+1}-W_j)>\rho x, a_{k+1}(W_{k+2}-W_{k+1})>\rho x\bigg)   \\
&=
\Prob\bigg(\sum_{j=0}^{k}a_j(W_{j+1}-W_j)>(1-\rho)x\bigg)+\Prob\big(a_{k+1}(W_{k+2}-W_{k+1})>(1-\rho)x\big) \\
&\hphantom{\leq~}+\E\Big[ \1_{\{\sum_{j=0}^{k}a_j(W_{j+1}-W_j)>\rho x\}}\Prob_{k+1}\big(a_{k+1}(W_{k+2}-W_{k+1})>\rho x\big)\Big],
\end{align*}
where we used the fact that the variable $\sum_{j=0}^{k}a_j(W_{j+1}-W_j)$ is $\F_{k+1}$-measurable.
Set $\zeta_1 \defeq 0$, $\zeta_2 \defeq ((a_{k+1}^+)/\rho)^{\alpha} \sum_{|u|=k+1}Y_u^\alpha$ and,
for $x>0$,
\begin{equation*}
\zeta_1(x)\defeq \1_{\{\sum_{j=0}^{k}a_j(W_{j+1}-W_j)>\rho x\}}\frac{\Prob_{k+1}(a_{k+1}(W_{k+2}-W_{k+1})>\rho x)}{1-F(x)},
\end{equation*}
\begin{equation*}
\zeta_2(x)\defeq \frac{\Prob_{k+1}(a_{k+1}(W_{k+2}-W_{k+1})>\rho x)}{1-F(x)}.
\end{equation*}
In view of \eqref{eq:basic tail estimate W_n+1-W_n under P_n}, we have
$\lim_{x\to\infty} \zeta_1(x)=\zeta_1$ a.\,s.\ and $\lim_{x\to\infty} \zeta_2(x)=\zeta_2$ a.\,s.
Further, $\lim_{x\to\infty} \E[\zeta_2(x)]=\E[\zeta_2]$ by \eqref{eq:basic tail estimate W_n+1-W_n under P}.
Since, for $x>0$, we have $0\leq \zeta_1(x)\leq \zeta_2(x)$ a.\,s.,
we can invoke Pratt's lemma \cite{Pratt:1960} to get $\lim_{x \to \infty} \E[\zeta_1(x)]=\E[\zeta_1]$. Hence,
\begin{equation}    \label{eq:3rd term}
\lim_{x \to \infty} \frac{\Prob\big(\sum_{i=0}^{k}a_i(W_{i+1}-W_i)>\rho x, a_{k+1}(W_{k+2}-W_{k+1})>\rho x\big)}{1-F(x)}=0.
\end{equation}
By the induction hypothesis, \eqref{eq:basic tail estimate W_n+1-W_n under P} and \eqref{eq:3rd term}
\begin{equation*}
\limsup_{x \to \infty}
\frac{\Prob\big(\sum_{j=0}^{k+1}a_j(W_{j+1}-W_j)>x\big)}{1-F(x)}\leq (1-\rho)^{-\alpha}\sum_{j=0}^{k+1} m_\theta(\alpha)^{j}(a_{j}^+)^{\alpha}.
\end{equation*}
Letting $\rho \downarrow 0$ yields
\begin{equation}    \label{eq:limsup tail under P}
\limsup_{x\to\infty}
\frac{\Prob\big(\sum_{j=0}^{k+1}a_j(W_{j+1}-W_j)>x\big)}{1-F(x)} \leq \sum_{j=0}^{k+1} m_\theta(\alpha)^{j}(a_{j}^+)^{\alpha}.
\end{equation}
We now derive the corresponding inequality for the limit inferior.
To this end, for $x>0$ and $\rho>0$, we write
\begin{align*}
\Prob\bigg(&\sum_{j=0}^{k+1}a_j(W_{j+1}-W_j)>x\bigg)\\
&\geq   \Prob\bigg(\sum_{j=0}^{k}a_j(W_{j+1}-W_j)>(1+\rho)x, |a_{k+1}(W_{k+2}-W_{k+1})|\leq \rho x\bigg)    \\
&\hphantom{\geq~}
+\Prob\Big(a_{k+1}(W_{k+2}-W_{k+1})>(1+\rho)x, \Big|\sum_{j=0}^{k}a_j(W_{j+1}-W_j)\Big|\leq \rho x\Big) \\
&=
\Prob\bigg(\sum_{j=0}^{k}a_j(W_{j+1}-W_j)>(1+\rho) x\bigg)  \\
&\hphantom{=~}
- \Prob\bigg(\sum_{j=0}^{k}a_j(W_{j+1}-W_j)>(1+\rho)x, |a_{k+1}(W_{k+2}-W_{k+1})|>\rho x\bigg)  \\
&\hphantom{=~}+
\Prob\big(a_{k+1}(W_{k+2}-W_{k+1})>(1+\rho) x\big)  \\
&\hphantom{=~}-
\Prob\bigg(a_{k+1}(W_{k+2}-W_{k+1})>(1+\rho) x, |\sum_{j=0}^{k}a_j(W_{j+1}-W_j)|>\rho x\bigg).
\end{align*}
The argument that led to \eqref{eq:3rd term} applies here as well.
It gives
\begin{equation}    \label{eq:Pratt 2}
\lim_{x \to \infty} \frac{\Prob\big(\sum_{j=0}^{k}a_j(W_{j+1}\!-\!W_j)>(1+\rho) x, |a_{k+1}(W_{k+2}\!-\!W_{k+1})|>\rho x\big)}{1-F(x)}=0
\end{equation}
and
\begin{equation}    \label{eq:Pratt 3}
\lim_{x\to\infty} \frac{\Prob\big(a_{k+1}(W_{k+2}\!-\!W_{k+1})>(1+\rho) x,
|\sum_{j=0}^{k}a_j(W_{j+1}\!-\!W_j)|>\rho x\big)}{1-F(x)}=0.
\end{equation}
By the induction hypothesis, \eqref{eq:basic tail estimate W_n+1-W_n under P}, \eqref{eq:Pratt 2} and
\eqref{eq:Pratt 3}
\begin{align*}
\liminf_{x \to \infty}
\frac{\Prob\big(\sum_{j=0}^{k+1}a_j(W_{j+1}-W_j)>x\big)}{1-F(x)}    
\geq (1+\rho)^{-\alpha}\sum_{j=0}^{k+1} m_\theta(\alpha)^{j}(a_j^+)^{\alpha}.
\end{align*}
Upon letting $\rho \downarrow 0$, we obtain
\begin{equation}    \label{eq:liminf tail under P}
\liminf_{x \to \infty}
\frac{\Prob\big(\sum_{j=0}^{k+1}a_j(W_{j+1}-W_j)>x\big)}{1-F(x)}\geq \sum_{j=0}^{k+1} m_\theta(\alpha)^{j}(a_j^+)^{\alpha}.
\end{equation}
Combining \eqref{eq:limsup tail under P} and \eqref{eq:liminf tail under P} gives \eqref{eq:tail <grad W,a>red}
for $k+1$, thereby proving \eqref{eq:tail <grad W,a>red} in general.  

To check \eqref{eq:tail <grad W,a>} we fix $k \in \N_0$, $x>0$ and $\rho\in (0,1)$, and write
\begin{align}
\Prob&\bigg(\sum_{j=0}^k a_j(W_{j+1}-W_j)>(1+\rho)x\bigg)-\Prob\bigg(\Big|\sum_{j\geq k+1} a_j(W_{j+1}-W_j)\Big|>\rho x\bigg)  \label{ineq:1st for W-W_n}  \\
&\leq \Prob\bigg(\sum_{j\geq 0} a_j(W_{j+1}-W_j)>x\bigg)    \notag  \\
&\leq \Prob\bigg(\sum_{j=0}^k a_j(W_{j+1}-W_j)>(1-\rho)x\bigg) + \Prob\bigg(\Big|\sum_{j\geq k+1} a_j(W_{j+1}-W_j)\Big|>\rho x\bigg).  \notag
\end{align}
From \eqref{ineq:1st for W-W_n}, \eqref{eq:tail <grad W,a>red} and \eqref{eq:basic tail estimate <grad W,a> under P_n uniform}, we infer
\begin{align*}
(1+\rho)^{-\alpha}&\sum_{j=0}^k m_\theta(\alpha)^{j}(a_j^+)^{\alpha} - C \rho^{-\alpha} \sum_{j \geq k+1}|a_j|^{\alpha-\delta} \E[\Xi_j]  \\
&\leq
\liminf_{x\to\infty}\frac{\Prob(\sum_{j\geq 0} a_j(W_{j+1}-W_j)>x)}{1-F(x)} \\
&\leq
\limsup_{x\to\infty}\frac{\Prob(\sum_{j\geq 0} a_j(W_{j+1}-W_j)>x)}{1-F(x)} \\
&\leq
(1-\rho)^{-\alpha}\sum_{j=0}^k m_\theta(\alpha)^{j}(a_j^+)^{\alpha}+ C \rho^{-\alpha} \sum_{j \geq k+1}|a_j|^{\alpha-\delta} \E[\Xi_j]
\end{align*}
Letting $k \to \infty$ and then $\rho \downarrow 0$,
we arrive at \eqref{eq:tail <grad W,a>}. The proof of \eqref{eq:tail <grad W,a->} is analogous, hence omitted.

A perusal of the proof above reveals that the need for condition
\eqref{eq:m((alpha+epsilon)theta)<infty} is only motivated by the use of Potter's bound, see
\eqref{eq:A6}, \eqref{eq:A7} and \eqref{eq:A8}. If
$\lim_{x\to\infty}\,\ell(x)=c$, that is, condition \eqref{eq:tail assumption W_1}
holds, inequality \eqref{eq:A6} can be replaced by the following:
for any $A >1$ there exists $x_0>0$ such that whenever $x\geq x_0$
and $ux \geq x_0$,
\begin{equation*}
\tfrac{1-F(ux)}{1-F(x)}\leq A  u^{-\alpha},
\end{equation*}
likewise for \eqref{eq:A7} and \eqref{eq:A8} (with the same $x_0$ as $x_0$ can be increased if necessary).
This shows that condition
\eqref{eq:m((alpha+epsilon)theta)<infty} is no longer needed,
\eqref{eq:contraction assumption} being sufficient. The proof of Theorem \ref{Thm:tail of W(theta)} is complete.

\section{Proof of Theorem \ref{Thm:stable fluctuations of Biggins's martingale}}    \label{sec:stable fluctuations of Biggins's martingale}

Our proof of Theorem \ref{Thm:stable fluctuations of Biggins's martingale} is essentially based on the
following result in combination with Theorem \ref{Thm:tail of W(theta)}.

\begin{Lemma}   \label{Lem:Theta_n convergence}
Let $V=f((\cZ(u))_{u \in \I})$ for a measurable function $f$
such that $\E[V]=0$ and
\begin{equation}    \label{eq:tails of V}
\Prob(V>x)~\sim~ c_1x^{-\alpha}\quad\text{and}\quad
\Prob(-V>x)~\sim~ c_2x^{-\alpha},\quad x\to\infty
\end{equation}
for some $\alpha\in (1,2)$ and finite $c_1,c_2 \geq 0$ with $c_1+c_2>0$.
Further, suppose that $m(\alpha\theta)<\infty$ (\eqref{eq:contraction assumption} is not required).
For $n \in \N$, set
\begin{equation}    \label{xn}
\Theta_n\defeq m(\alpha\theta)^{-n/\alpha}\sum_{|u|=n}e^{-\theta S(u)}V^{(u)},
\end{equation}
where $V^{(u)} = f((\cZ(uv))_{v \in \I})$ for $u \in \I$.
Then, for $t \in \R$,
\begin{align}
&\lim_{n \to \infty} \,\E\big[\exp(\imag t \Theta_n)\big]   \notag  \\
&~=
\E\bigg[\!\exp\!\bigg(\frac{\Gamma(2\!-\!\alpha)}{\alpha\!-\!1}W(\alpha\theta)|t|^\alpha
\Big(\!(c_1\!+\!c_2)\cos \! \Big(\frac{\pi\alpha}{2}\Big)-\imag (c_1\!-\!c_2)\sin \! \Big(\frac{\pi\alpha}{2}\Big)\sign(t) \Big)\!\bigg)\bigg]\!.
\label{eq:convergence to stable}
\end{align}
\end{Lemma}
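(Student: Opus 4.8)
The plan is to compute the limiting characteristic function of $\Theta_n$ by conditioning on $\F_n$. For $|u| = n$ put $M_{u,n} \defeq m(\alpha\theta)^{-n/\alpha}\eee^{-\theta S(u)} > 0$, so that $\Theta_n = \sum_{|u| = n} M_{u,n} V^{(u)}$ and
\begin{equation*}
\sum_{|u| = n} M_{u,n}^\alpha = \frac{1}{m(\alpha\theta)^n}\sum_{|u| = n}\eee^{-\alpha\theta S(u)} = W_n(\alpha\theta) \to W(\alpha\theta) \quad\text{a.\,s.},
\end{equation*}
the convergence being that of Biggins' martingale for the parameter $\alpha\theta$ (well defined since $m(\alpha\theta) < \infty$). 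First one checks that the series defining $\Theta_n$ converges almost surely: conditionally on $\F_n$ it is a sum of centred i.i.d.\ copies of $V$ with deterministic weights $M_{u,n}$, one has $\E|V|^p < \infty$ for $p \in (1,\alpha)$ by \eqref{eq:tails of V}, and $\E\big[\sum_{|u| = n} M_{u,n}^p\big] = m(p\theta)^n m(\alpha\theta)^{-np/\alpha} < \infty$ for $p \in [1,\alpha]$ (from $\eee^{-p\theta x} \le \eee^{-\theta x} + \eee^{-\alpha\theta x}$, whence $m(p\theta) \le m(\theta) + m(\alpha\theta)$), so a von Bahr--Esseen type inequality gives $L_p$-boundedness of the partial sums. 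Since the $V^{(u)}$, $|u| = n$, are i.i.d.\ copies of $V$ independent of $\F_n$ whereas the $M_{u,n}$ are $\F_n$-measurable,
\begin{equation*}
\E\big[\eee^{\imag t \Theta_n} \mid \F_n\big] = \prod_{|u| = n} \varphi_V(t M_{u,n}), \qquad \varphi_V(s) \defeq \E\big[\eee^{\imag s V}\big].
\end{equation*}

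By \eqref{eq:tails of V} and $\E[V] = 0$, the law of $V$ lies in the domain of normal attraction of an $\alpha$-stable law, and the standard characteristic-function expansion gives, as $s \to 0$,
\begin{equation*}
\varphi_V(s) = 1 + \psi(s) + o(|s|^\alpha), \qquad \psi(s) \defeq \frac{\Gamma(2 - \alpha)}{\alpha - 1}|s|^\alpha\Big((c_1 + c_2)\cos\!\big(\tfrac{\pi\alpha}{2}\big) - \imag(c_1 - c_2)\sin\!\big(\tfrac{\pi\alpha}{2}\big)\sign(s)\Big).
\end{equation*}
Since $\cos(\pi\alpha/2) < 0$ for $\alpha \in (1,2)$ we have $\Real\psi(s) \le 0$, so $|\eee^{\psi(s)}| \le 1$; moreover $\psi$ is positively homogeneous of degree $\alpha$, hence $\sum_{|u| = n}\psi(t M_{u,n}) = \psi(t)\sum_{|u| = n} M_{u,n}^\alpha = \psi(t) W_n(\alpha\theta) \to \psi(t) W(\alpha\theta)$ a.\,s., and $\psi(t) W(\alpha\theta)$ is precisely the exponent on the right-hand side of \eqref{eq:convergence to stable}. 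The key analytic input is the asymptotic negligibility of the weights, $M_n^\ast \defeq \max_{|u| = n} M_{u,n} \to 0$ a.\,s. Granting this, on the almost sure event on which also $W_n(\alpha\theta) \to W(\alpha\theta)$ we may, for all large $n$, take logarithms in the product — each factor being near $1$ — and use $\log\varphi_V(s) = \psi(s) + o(|s|^\alpha)$ to obtain
\begin{equation*}
\sum_{|u| = n}\log\varphi_V(t M_{u,n}) = \psi(t) W_n(\alpha\theta) + \sum_{|u| = n} o\big(|t|^\alpha M_{u,n}^\alpha\big) \to \psi(t) W(\alpha\theta),
\end{equation*}
the error being controlled because for every $\eps > 0$ eventually $|o(|t|^\alpha M_{u,n}^\alpha)| \le \eps |t|^\alpha M_{u,n}^\alpha$ uniformly in $u$, so the error sum is at most $\eps|t|^\alpha W_n(\alpha\theta) \to \eps|t|^\alpha W(\alpha\theta)$ with $\eps$ arbitrary. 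Hence $\E[\eee^{\imag t\Theta_n}\mid\F_n] \to \eee^{\psi(t) W(\alpha\theta)}$ a.\,s., and since this conditional expectation has modulus at most $1$, dominated convergence gives $\E[\eee^{\imag t\Theta_n}] \to \E[\eee^{\psi(t) W(\alpha\theta)}]$, which is \eqref{eq:convergence to stable}.

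It remains to establish $M_n^\ast \to 0$ a.\,s., and this is the step I expect to be the main obstacle, since only $m(\alpha\theta) < \infty$ is available. On the event $\{W(\alpha\theta) = 0\}$ it is immediate from $(M_n^\ast)^\alpha \le \sum_{|u| = n} M_{u,n}^\alpha = W_n(\alpha\theta) \to 0$. On the complement I would use a first-moment (Chernoff) estimate for the branching random walk: for any $t > 0$ with $m(t\theta) < \infty$ and any $\eps > 0$,
\begin{equation*}
\Prob\big(M_n^\ast \ge \eps\big) \le \E\Big[\#\{|u| = n : M_{u,n}^t \ge \eps^t\}\Big] \le \eps^{-t}\,\E\Big[\sum_{|u| = n} M_{u,n}^t\Big] = \eps^{-t}\Big(\frac{m(t\theta)}{m(\alpha\theta)^{t/\alpha}}\Big)^n,
\end{equation*}
so it suffices to find an admissible $t$ with $\tfrac1t\log m(t\theta) < \tfrac1\alpha\log m(\alpha\theta)$. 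Such a $t$ is produced from the convexity of $p \mapsto \log m(p\theta)$, the fact that $\log m(p\theta)/p \to +\infty$ as $p \downarrow 0$ (because $m(0) = \E[N] > 1$), and — in the remaining boundary situation, where $\alpha\theta$ is the abscissa of convergence — the non-degeneracy of $W(\alpha\theta)$, which forces $p \mapsto \log m(p\theta)/p$ to be strictly decreasing at $p = \alpha$; this last case is the technical heart of the argument and, if the above does not apply directly, is to be dispatched via the a priori bound $(M_n^\ast)^\alpha \le W_n(\alpha\theta)$ together with a sharper analysis of the leftmost particle or the spine decomposition of the martingale $W_n(\alpha\theta)$. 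Once $t$ is fixed the displayed bound is summable in $n$, and Borel--Cantelli completes the proof.
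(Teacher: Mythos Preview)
Your overall strategy---compute the conditional characteristic function given $\F_n$ as a product $\prod_{|u|=n}\varphi_V(tM_{u,n})$, use the small-$s$ expansion $\log\varphi_V(s)=\psi(s)+o(|s|^\alpha)$ together with the $\alpha$-homogeneity of $\psi$ to reduce everything to $\psi(t)W_n(\alpha\theta)\to\psi(t)W(\alpha\theta)$, and then pass to unconditional expectations by dominated convergence---is correct and is essentially a streamlined version of the paper's argument. The paper instead verifies the four conditions of Gnedenko--Kolmogorov's triangular-array theorem (Theorem~1, p.\,116) for the conditional law given $\F_n$, which amounts to the same computation in different packaging; both routes hinge on exactly the same two ingredients, namely $W_n(\alpha\theta)\to W(\alpha\theta)$ and $M_n^\ast\to0$ a.s.

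The genuine gap is in your treatment of $M_n^\ast\to0$. Your Chernoff bound needs some $t$ with $m(t\theta)<\infty$ and $g(t)\defeq\log m(t\theta)-(t/\alpha)\log m(\alpha\theta)<0$. But $g$ is convex with $g(\alpha)=0$, and in the nondegenerate case $\Prob(W(\alpha\theta)>0)>0$ Biggins' criterion gives $\alpha\theta m'(\alpha\theta)/m(\alpha\theta)<\log m(\alpha\theta)$, i.e.\ $g'(\alpha)<0$; convexity then forces $g(t)\ge g(\alpha)+g'(\alpha)(t-\alpha)>0$ for \emph{every} $t<\alpha$. So any admissible $t$ must satisfy $t>\alpha$, which requires $m((\alpha+\eps)\theta)<\infty$---precisely the extra moment assumption the lemma is designed to avoid. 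Your fallback suggestions (leftmost particle, spine decomposition) point in the right direction but are not carried out. The paper sidesteps the issue by citing Theorem~3 of Biggins (1998), which yields $\sup_{|u|=n}e^{-\theta S(u)}/m(\alpha\theta)^{n/\alpha}\to0$ a.s.\ under $m(\alpha\theta)<\infty$ alone; invoking that result closes your argument immediately.
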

\begin{proof}
Since, conditionally given $\F_n$, $\Theta_n$ is a weighted sum of i.i.d.\ random variables,
\eqref{eq:convergence to stable} follows from the classical limit theory for triangular arrays.

Suppose we can check that, for every $x>0$,
\begin{align}
L(x) &\defeq -\lim_{n\to\infty}\,\sum_{|u|=n} \Prob_n\bigg(\frac{e^{-\theta S(u)}V^{(u)}}{m(\alpha\theta)^{n/\alpha}} > x\bigg)
=-c_1x^{-\alpha}W(\alpha\theta)\quad\text{a.\,s.;}      \label{eq:L(x)>0}   \\
L(-x) &\defeq \lim_{n\to\infty}\,\sum_{|u|=n} \Prob_n
\bigg(\frac{e^{-\theta S(u)} V^{(u)}}{m(\alpha\theta)^{n/\alpha}} \leq -x\bigg)
=c_2 x^{-\alpha}W(\alpha\theta)\quad\text{a.\,s.};  \label{eq:L(x)<0}
\end{align}
\begin{equation}    \label{eq:sigma}
\sigma^2
\defeq \lim_{\varepsilon \downarrow 0} \lim_{n\to\infty}\,
\sum_{|u|=n} \Var_n\bigg[\frac{e^{-\theta S(u)} V^{(u)}}{m(\alpha\theta)^{n/\alpha}} \1_{\big\{\frac{e^{-\theta
S(u)}|V^{(u)}|}{m(\alpha\theta)^{n/\alpha}} \leq \varepsilon\big\}}\bigg]=0 \quad   \text{a.\,s.}
\end{equation}
and
\begin{align}   \label{eq:a}
a_0(\tau)
&\defeq
\lim_{n\to\infty}\,\sum_{|u|=n} \E_n\bigg[\frac{e^{-\theta S(u)}V^{(u)}}{m(\alpha\theta)^{n/\alpha}}
\1_{\{|e^{-\theta S(u)} V^{(u)}| \leq \tau m(\alpha\theta)^{n/\alpha}\}}\bigg]  \notag  \\
&= -\tau^{1-\alpha} \frac{\alpha (c_1-c_2)}{\alpha-1} W(\alpha\theta) \quad \text{a.\,s.}
\end{align}
for each $\tau>0$. Then, according to Theorem 1 on p.~116 in \cite{Gnedenko+Kolmogorov:1968},
\begin{align}
&\lim_{n\to\infty} \E_n\big[\imag t \Theta_n\big]   \notag  \\
&=\exp\bigg(\imag at-\frac{\sigma^2t^2}{2}+\int_{\R \setminus \{0\}}\bigg(e^{\imag tx}-1-\frac{\imag tx}{1+x^2}\bigg) \, \dL(x)\bigg)\notag \\
&=
\exp\bigg(-\alpha c_2W(\alpha\theta)\bigg(\frac{\imag \pi t}{2\cos(\frac{\pi\alpha}{2})}-\int_{-\infty}^0
\bigg(e^{\imag tx}-1-\frac{\imag tx}{1+x^2}\bigg)|x|^{-\alpha-1}\, \dx \bigg)\bigg)\notag   \\
&\hphantom{=}~\cdot\exp\bigg(\alpha c_1W(\alpha\theta) \bigg(\frac{\imag \pi t}{2\cos(\frac{\pi\alpha}{2})}
+\int_0^\infty \bigg(e^{\imag tx}-1-\frac{\imag tx}{1+x^2}\bigg)x^{-\alpha-1}\, \dx \bigg)\bigg)    \text{ a.\,s.}
\label{eq:Kolmogorov's limit relation}
\end{align}
for $t\in\R$. Here,
\begin{equation*}
a \defeq a_0(\tau)-\int_{[-\tau,\,\tau]}\frac{x^3}{1+x^2} \, \dL(x)
+\int_{\R \setminus [-\tau,\,\tau]}\frac{x}{1+x^2} \, \dL(x)=\frac{\alpha (c_1-c_2)\pi W(\alpha\theta)}{2\cos(\frac{\pi\alpha}{2})}
\end{equation*}
as a consequence of
\begin{equation*}
\int_0^\tau \! \frac{x^{2-\alpha}}{1\!+\!x^2} \, \dx  - \int_\tau^\infty \!\! \frac{x^{-\alpha}}{1\!+\!x^2}\, \dx
=\int_0^\infty \!\! \frac{x^{2-\alpha}}{1\!+\!x^2}\, \dx - \int_\tau^\infty \!\! x^{-\alpha}\, \dx
=-\frac{\pi}{2\cos(\frac{\pi\alpha}{2})}-\frac{\tau^{1-\alpha}}{\alpha-1}.
\end{equation*}
The last equality follows from
\begin{align}
\int_0^\infty \!\! \frac{x^{2-\alpha}}{1+x^2}\, \dx
&= \frac{1}{2} \int_0^1 \! x^{(1-\alpha)/2}(1-x)^{-(3-\alpha)/2}\, \dx
=\frac{1}{2} \Gamma\Big(\frac{3-\alpha}{2}\Big)\Gamma\Big(1-\frac{3-\alpha}{2}\Big)\notag   \\
&=\frac{\pi}{2\sin\big(\frac{\pi(3-\alpha)}{2}\big)}=-\frac{\pi}{2\cos(\frac{\pi\alpha}{2})}.   \label{eq:integral identity}
\end{align}
In view of \eqref{eq:integral identity} the right-hand side of \eqref{eq:Kolmogorov's limit relation}
equals
\begin{align*}
&\exp\bigg(\alpha c_2W(\alpha\theta)\int_{-\infty}^0 \big(e^{\imag tx}-1-\imag tx\big)|x|^{-\alpha-1}\, \dx \bigg)\bigg)    \\
&~\cdot \exp\bigg(\alpha c_1W(\alpha\theta)\int_0^\infty \big(e^{\imag tx}-1-\imag tx\big) x^{-\alpha-1}\, \dx \bigg)\bigg) \\
&=
\exp\bigg(\frac{\Gamma(2-\alpha)}{\alpha-1}c_2W(\alpha\theta)|t|^\alpha\big(\cos(\tfrac{\pi\alpha}{2})+\imag \sin(\tfrac{\pi\alpha}{2})\,\sign(t)\big)\bigg)    \\
&\hphantom{=}~\cdot
\exp\bigg(\frac{\Gamma(2-\alpha)}{\alpha-1}c_1W(\alpha\theta)|t|^\alpha\big(\cos(\tfrac{\pi\alpha}{2})-\imag \sin(\tfrac{\pi\alpha}{2})\,\sign(t)\big)\bigg)    \\
&= \exp\bigg(\frac{\Gamma(2-\alpha)}{\alpha-1}W(\alpha\theta)|t|^\alpha \big((c_1+c_2)\cos(\tfrac{\pi\alpha}{2})-\imag (c_1-c_2)\sin(\tfrac{\pi\alpha}{2})\,\sign(t)\big)\bigg)
\end{align*}
having utilized the first formula given on p.\;170 in \cite{Gnedenko+Kolmogorov:1968} for the penultimate equality.
Now \eqref{eq:convergence to stable} is secured by \eqref{eq:Kolmogorov's limit relation}, the last displayed formula
and Lebesgue's dominated convergence theorem.

Next, we are passing to the proofs of \eqref{eq:L(x)>0} through \eqref{eq:a}.

\noindent {\sc Proofs of \eqref{eq:L(x)>0} and \eqref{eq:L(x)<0}}.
We start by recalling that, by Theorem 3 in \cite{Biggins:1998},
\begin{equation}    \label{eq:Biggins:1998 for Y_u^beta}
\lim_{n\to\infty} \,\sup_{|u|=n} \frac{e^{-\theta S(u)}}{m(\alpha\theta)^{n/\alpha}}=0 \quad \text{a.\,s.}
\end{equation}
Using this in combination with \eqref{eq:tails of V} gives, for any $x > 0$,
\begin{align*}
\sum_{|u|=n} \Prob_n \Big(\frac{e^{-\theta S(u)} V^{(u)}}{m(\alpha\theta)^{n/\alpha}} > x\Big)
&\sim \sum_{|u|=n} c_1\big(xe^{\theta S(u)}m(\alpha\theta)^{n/\alpha}\big)^{-\alpha} \notag  \\
&= c_1x^{-\alpha} W_n(\alpha\theta)~ \to~ c_1x^{-\alpha}
W(\alpha\theta)\quad \text{a.\,s.} 
\end{align*}
as $n \to \infty$. This proves \eqref{eq:L(x)>0}. The proof of \eqref{eq:L(x)<0} is analogous.

\noindent {\sc Proof of \eqref{eq:sigma}}.
For $\varepsilon > 0$,
\begin{align*}
\sum_{|u|=n} & \Var_n \! \bigg[\frac{e^{-\theta S(u)}V^{(u)}}{m(\alpha\theta)^{n/\alpha}}
\1_{\big\{\frac{e^{-\theta S(u)}|V^{(u)}|}{m(\alpha\theta)^{n/\alpha}} \leq \varepsilon\big\}} \bigg]    \\
&=
\sum_{|u|=n} \frac{e^{-2\theta S(u)}}{m(\alpha\theta)^{2n/\alpha}} \,
\E_n\big[(V^{(u)})^2 \1_{\{|V^{(u)}| \leq e^{\theta S(u)} m(\alpha\theta)^{n/\alpha} \varepsilon\}} \big]   \\
&=~ \sum_{|u|=n} \frac{e^{-2\theta S(u)}}{m(\alpha\theta)^{2n/\alpha}}
\int_{[0, e^{\theta S(u) m(\alpha\theta)^{n/\alpha}}\varepsilon]} y^2 \, \dProb(|V|\leq y\}.
\end{align*}
Observe that \eqref{eq:tails of V} entails
\begin{equation*}
\Prob(|V|>x) \sim (c_1+c_2)x^{-\alpha} \quad   \text{as } x \to \infty.
\end{equation*}
Integration by parts thus leads to
\begin{equation*}   \textstyle
\int_{[0,\,x]} y^2 \, \dProb(|V|\leq y)
\sim \frac{\alpha
(c_1+c_2)}{2-\alpha} x^{2-\alpha}   \quad   \text{as } x \to \infty.
\end{equation*}
Using this and \eqref{eq:Biggins:1998 for Y_u^beta}, we conclude
that, as $n \to \infty$,
\begin{align*}
\sum_{|u|=n} & \Var_n \! \bigg[\frac{e^{-\theta S(u)}V^{(u)}}{m(\alpha\theta)^{n/\alpha}}
\1_{\{|V^{(u)}| \leq e^{\theta S(u)} m(\alpha\theta)^{n/\alpha} \varepsilon\}} \bigg]    \\
&=~  \sum_{|u|=n} \frac{e^{-2\theta S(u)}}{m(\alpha\theta)^{2n/\alpha}}
\int_{[0, e^{\theta S(u) m(\alpha\theta)^{n/\alpha}}\varepsilon]} y^2 \, \dProb(|V|\leq y) \\
&\sim~
\sum_{|u|=n} \frac{e^{-2\theta S(u)}}{m(\alpha\theta)^{2n/\alpha}}
\frac{\alpha (c_1+c_2)}{2-\alpha} \big(e^{\theta S(u)} m(\alpha\theta)^{n/\alpha} \varepsilon \big)^{2-\alpha}  \\
&=~ \varepsilon^{2-\alpha} \frac{\alpha (c_1+c_2)}{2-\alpha} \sum_{|u|=n} \frac{e^{-\alpha\theta S(u)}}{m(\alpha\theta)^{n}}  \\
&=~ \varepsilon^{2-\alpha} \frac{\alpha (c_1+c_2)}{2-\alpha} W_n(\alpha\theta)
~\to~ \varepsilon^{2-\alpha} \frac{\alpha (c_1+c_2)}{2-\alpha} W(\alpha\theta) \quad \text{a.\,s.}
\end{align*}
This last expression vanishes as $\varepsilon \downarrow 0$
which proves \eqref{eq:sigma}.

\noindent {\sc Proof of \eqref{eq:a}}.
For every $\tau > 0$, since $\E[V]=0$, we have
\begin{align*}
\sum_{|u|=n} & \E_n\bigg[\frac{e^{-\theta S(u)}V^{(u)}}{m(\alpha\theta)^{n/\alpha}}
\1_{\big\{\big|\frac{e^{-\theta S(u)}V^{(u)}}{m(\alpha\theta)^{n/\alpha}}\big| \leq \tau\big\}} \bigg]    \\
&=~ -\sum_{|u|=n} \E_n\bigg[\frac{e^{-\theta S(u)}V^{(u)}}{m(\alpha\theta)^{n/\alpha}}
\1_{\big\{\big|\frac{e^{-\theta S(u)}V^{(u)}}{m(\alpha\theta)^{n/\alpha}}\big| > \tau\big\}} \bigg]    \\
&=~ -\sum_{|u|=n} \frac{e^{-\theta S(u)}}{m(\alpha\theta)^{n/\alpha}}
\E_n\bigg[V^{(u)}\1_{\big\{|V^{(u)}| > \tau \frac{m(\alpha\theta)^{n/\alpha}}{e^{-\theta S(u)}} \big\}} \bigg]    \\
&=~ -\sum_{|u|=n} \frac{e^{-\theta S(u)}}{m(\alpha\theta)^{n/\alpha}}
\int_{\R \setminus [-e^{\theta S(u)} m(\alpha\theta)^{n/\alpha} \tau,e^{\theta S(u)} m(\alpha\theta)^{n/\alpha} \tau]} y\, \dProb(V\leq y).
\end{align*}
Using \eqref{eq:tails of V} and integration by parts, we infer
\begin{equation*}   \textstyle
\int_{\R \setminus [-x,x]} y \, \dProb(V\leq y)
\sim \frac{\alpha (c_1-c_2)}{\alpha-1} x^{1-\alpha} \quad   \text{as } x \to \infty.
\end{equation*}
This asymptotic relation together with \eqref{eq:Biggins:1998 for Y_u^beta} implies that, as $n \to \infty$,
\begin{align*}
\sum_{|u|=n} & \E_n\bigg[\frac{e^{-\theta S(u)}V^{(u)}}{m(\alpha\theta)^{n/\alpha}}
\1_{\big\{\big|\frac{e^{-\theta S(u)}V^{(u)}}{m(\alpha\theta)^{n/\alpha}}\big| \leq \tau\big\}} \bigg]    \\
&=~ -\sum_{|u|=n} \frac{e^{-\theta S(u)}}{m(\alpha\theta)^{n/\alpha}}
\int_{\R \setminus [-e^{\theta S(u)} m(\alpha\theta)^{n/\alpha} \tau,e^{\theta S(u)} m(\alpha\theta)^{n/\alpha} \tau]}y\, \dProb(V\leq y)   \\
&\sim~ -\sum_{|u|=n} \frac{e^{-\theta S(u)}}{m(\alpha\theta)^{n/\alpha}}
\frac{\alpha (c_1-c_2)}{\alpha-1} \big(e^{\theta S(u)} m(\alpha\theta)^{n/\alpha} \tau \big)^{1-\alpha}    \\
&=~ -\tau^{1-\alpha} \frac{\alpha (c_1-c_2)}{\alpha-1}
\sum_{|u|=n} \frac{e^{-\alpha\theta S(u)}}{m(\alpha\theta)^{n}}
~\to~ -\tau^{1-\alpha} \frac{\alpha (c_1-c_2)}{\alpha-1} W(\alpha\theta) \quad \text{a.\,s.}
\end{align*}
This proves \eqref{eq:a}.
\end{proof}

\begin{proof}[Proof of Theorem \ref{Thm:stable fluctuations of Biggins's martingale}]
We show that, for any $r \in \N_0$,
\begin{align*}
\kappa^{-n/\alpha} \big((W\!-\!W_n),\ldots,
\kappa^{r/\alpha}(W\!-\!W_{n-r}) \big) \distto
W(\alpha\theta)^{1/\alpha}\left(U_0,\ldots,U_r\right).
\end{align*}
By the Cram\'er-Wold device, this is equivalent to proving the
following: for any $\beta_0,\ldots,\beta_r$ and $t \in \R$,
\begin{align}
&\lim_{n\to\infty}\E\bigg[\exp\!\bigg(\imag t\sum_{j=0}^r\beta_j\kappa^{-(n-j)/\alpha}(W\!-\!W_{n-j})\bigg)\bigg]   \notag  \\
&~=\E\bigg[\!\exp\!\bigg(\imag
tW(\alpha\theta)^{1/\alpha}\sum_{j=0}^r\beta_j U_{j}\bigg)\bigg]
=\E\bigg[\Phi(\gamma_0 W(\alpha\theta)^{1/\alpha}t)\prod_{i=1}^r
\Psi\big(\gamma_i W(\alpha\theta)^{\frac1\alpha}t\big)\bigg],
\label{eq:convergence ch.f. fdd}
\end{align}
where
\begin{equation}    \label{eq:gamma}
\gamma_i \defeq \sum_{j=i}^r\beta_j\kappa^{(j-i)/\alpha}
\end{equation}
for $i=0,\ldots,r$ and, for $t \in \R$,
\begin{align*}
\Phi(t)
&\defeq \E\big[\exp(\imag tU_0)\big]
=\exp\!\Big(\frac{\Gamma(2\!-\!\alpha)}{\alpha-1}\frac{c|t|^\alpha}{1-\kappa}
\Big(\!\cos\!\Big(\frac{\pi\alpha}{2}\Big)- \imag \sin\!\Big(\frac{\pi\alpha}{2}\Big)\sign(t)\Big) \Big),   \\
\Psi(t)
&\defeq \E\big[\exp(\imag tQ_1)\big]
=\exp\!\Big(\frac{\Gamma(2\!-\!\alpha)}{\alpha-1} c|t|^\alpha\Big(\!\cos\!\Big(\frac{\pi\alpha}{2}\Big)- \imag \sin\!\Big(\frac{\pi\alpha}{2}\Big)\sign(t) \Big) \Big)
\end{align*}
(see \eqref{eq:chf of Q_1} and \eqref{eq:chf U_0}).
Using the representation
\begin{equation*}
U_j=\kappa^{j/\alpha}U_0+\kappa^{(j-1)/\alpha}Q_1+\ldots+\kappa^{1/\alpha}Q_{j-1}+Q_j
\end{equation*}
for $j \in \N$, we obtain
\begin{equation*}
\sum_{j=0}^r \beta_j
U_{j}=\sum_{j=0}^r\beta_j\kappa^{j/\alpha}U_0+\sum_{i=1}^r\sum_{j=i}^r\beta_j\kappa^{(j-i)/\alpha}
Q_i
\end{equation*}
which justifies the last equality in \eqref{eq:convergence ch.f. fdd}.

Let $n \geq r$. For notational convenience, we set $\beta_j = 0$
for $j<0$. Then we have
\begin{align*}
\sum_{j=0}^r&\, \beta_j\kappa^{-(n-j)/\alpha}(W-W_{n-j})=\sum_{j=0}^r \beta_j\kappa^{-(n-j)/\alpha}\sum_{i\geq n-j}(W_{i+1}-W_{i})\\
&=\sum_{i\geq n-r}(W_{i+1}-W_i)\sum_{j=n-i}^r\beta_j\kappa^{-(n-j)/\alpha}\\
&=\sum_{i\geq 0}(W_{i+1+n-r}-W_{i+n-r})\sum_{j=r-i}^r \beta_j\kappa^{-(n-j)/\alpha}\\
&={m(\alpha\theta)}^{-(n-r)/\alpha}\sum_{|u|=n-m}e^{-\theta
S(u)}\sum_{i\geq 0}(W_{i+1}^{(u)}-W_{i}^{(u)})\sum_{j=r-i}^r
\beta_j\kappa^{-(r-j)/\alpha}.
\end{align*}
The last expression equals $\Theta_{n-r}$ defined in \eqref{xn}
with
\begin{equation*}
V=\sum_{i\geq 0} \kappa^{-i/\alpha}\gamma_{r-i}(W_{i+1}-W_{i}),
\end{equation*}
where for the negative integers we set $\gamma_{-i}\defeq\kappa^{i/\alpha}\gamma_0$.
Observe that the so defined $V$ is centered. Further, since the sequence $a_i \defeq
\kappa^{-i/\alpha}\gamma_{r-i}$ is eventually constant, by Theorem \ref{Thm:tail of W(theta)}, the distribution of $V$ satisfies
\eqref{eq:tails of V} with
\begin{align*}
c_1=c \sum_{i=-\infty}^r(\gamma_i^+)^{\alpha} \quad \text{and}
\quad c_2=c \sum_{i=-\infty}^r (\gamma_{i}^-)^{\alpha}.
\end{align*}
According to relation \eqref{eq:convergence to stable} from Lemma \ref{Lem:Theta_n convergence}
with these $c_1$ and $c_2$, we have
\begin{align*}
&\lim_{n\to\infty}
\E\bigg[\exp\bigg(\imag t\sum_{j=0}^r \beta_j\kappa^{-(n-j)/\alpha}(W\!-\!W_{n-j})\bigg)\bigg]   \\
&~=\E\bigg[\exp\bigg(B|t|^\alpha \sum_{j=-\infty}^r\Big(|\gamma_j|^{\alpha}\big(\cos (\tfrac{\pi\alpha}{2})-
\imag \sin (\tfrac{\pi\alpha}{2})\sign(\gamma_{j}t)\big)\Big) \bigg)\bigg]\\
&~=\E\bigg[\exp\!\Big((A|\gamma_0t|^\alpha\big(\!\cos(\tfrac{\pi\alpha}{2})\!-\!\imag \sin(\tfrac{\pi\alpha}{2})\sign(\gamma_0t)\big)\Big)\\
&\hphantom{~= \E\bigg[}~\cdot\prod_{j=1}^r \exp\bigg(B|\gamma_j t|^\alpha \big(\cos (\tfrac{\pi\alpha}{2})-\imag \sin (\tfrac{\pi\alpha}{2})\sign(\gamma_{j}t)\big) \bigg)\bigg]\\
&~= \E\bigg[\Phi(\gamma_0W(\alpha\theta)^{1/\alpha}t) \prod_{j=1}^r \Psi(\gamma_j
W(\alpha\theta)^{1/\alpha}t)\bigg],
\end{align*}
where
\begin{equation*}
A\defeq \frac{\Gamma(2-\alpha)}{\alpha-1}\frac{cW(\alpha\theta)}{1-\kappa}\quad\text{and}\quad B\defeq \frac{\Gamma(2-\alpha)}{\alpha-1}cW(\alpha\theta).\end{equation*}
The proof of Theorem \ref{Thm:stable fluctuations of Biggins's martingale} is complete.
\end{proof}

\subsection*{Acknowledgements}

The research of K.\,K.~and M.\,M.~was supported by DFG Grant ME
3625/3-1. K.\,K.~was partially supported by the National Science
Center, Poland (Sonata Bis, grant number DEC-2014/14/E/ST1/00588).
A part of the work was carried out during a visit of A.\,I.~to
Innsbruck University. The visit was supported by DFG Grant ME
3625/3-1 which is gratefully acknowledged.


\begin{thebibliography}{10}

\bibitem{Biggins:1977}
J.~D. Biggins.
\newblock Martingale convergence in the branching random walk.
\newblock {\em J. Appl. Probab.}, 14(1):25--37, 1977.

\bibitem{Biggins:1992}
J.~D. Biggins.
\newblock Uniform convergence of martingales in the branching random walk.
\newblock {\em Ann. Probab.}, 20(1):137--151, 1992.

\bibitem{Biggins:1998}
John~D. Biggins.
\newblock Lindley-type equations in the branching random walk.
\newblock {\em Stochastic Process. Appl.}, 75(1):105--133, 1998.

\bibitem{Bingham+Doney:1974}
N.~H. Bingham and R.~A. Doney.
\newblock Asymptotic properties of supercritical branching processes. {I}.
  {T}he {G}alton-{W}atson process.
\newblock {\em Adv. Appl. Probab.}, 6:711--731, 1974.

\bibitem{Bingham+Doney:1975}
N.~H. Bingham and R.~A. Doney.
\newblock Asymptotic properties of supercritical branching processes. {II}.
  {C}rump-{M}ode and {J}irina processes.
\newblock {\em Adv. Appl. Probab.}, 7:66--82, 1975.

\bibitem{Bingham+Goldie+Teugels:1989}
Nicholas~H. Bingham, Charles~M. Goldie, and J{\'o}zef~L. Teugels.
\newblock {\em Regular variation}, volume~27 of {\em Encyclopedia of
  Mathematics and its Applications}.
\newblock Cambridge University Press, Cambridge, 1989.

\bibitem{deMeyer:1982}
A.~De~Meyer.
\newblock On a theorem of {B}ingham and {D}oney.
\newblock {\em J. Appl. Probab.}, 19(1):217--220, 1982.

\bibitem{Derrida+Spohn:1988}
B.~Derrida and H.~Spohn.
\newblock Polymers on disordered trees, spin glasses, and traveling waves.
\newblock {\em J. Statist. Phys.}, 51(5-6):817--840, 1988.
\newblock New directions in statistical mechanics (Santa Barbara, CA, 1987).

\bibitem{Fay+al:2006}
Gilles Fa\"y, B\'arbara Gonz\'alez-Ar\'evalo, Thomas Mikosch, and Gennady
  Samorodnitsky.
\newblock Modeling teletraffic arrivals by a {P}oisson cluster process.
\newblock {\em Queueing Syst.}, 54(2):121--140, 2006.

\bibitem{Gnedenko+Kolmogorov:1968}
B.~V. Gnedenko and A.~N. Kolmogorov.
\newblock {\em Limit distributions for sums of independent random variables}.
\newblock Translated from the Russian, annotated, and revised by K. L. Chung.
  With appendices by J. L. Doob and P. L. Hsu. Revised edition. Addison-Wesley
  Publishing Co., Reading, Mass.-London-Don Mills., Ont., 1968.

\bibitem{Hartung+Klimovsky:2017}
Lisa Hartung and Anton Klimovsky.
\newblock The phase diagram of the complex branching {B}rownian motion energy
  model, 2017, preprint.

\bibitem{Heyde:1971}
C.~C. Heyde.
\newblock Some central limit analogues for supercritical {G}alton-{W}atson
  processes.
\newblock {\em J. Appl. Probab.}, 8:52--59, 1971.

\bibitem{Iksanov+Polotskiy:2006}
Aleksander Iksanov and Sergey Polotskiy.
\newblock Regular variation in the branching random walk.
\newblock {\em Theory Stoch. Process.}, 12 (28), no.~1-2:38--54, 2006.

\bibitem{Iksanov+Kabluchko:2016}
Alexander Iksanov and Zakhar Kabluchko.
\newblock A central limit theorem and a law of the iterated logarithm for the
  {B}iggins martingale of the supercritical branching random walk.
\newblock {\em J. Appl. Probab.}, 53(4):1178--1192, 2016.

\bibitem{Kokoszka+Taqqu:1996}
Piotr~S. Kokoszka and Murad~S. Taqqu.
\newblock Parameter estimation for infinite variance fractional {ARIMA}.
\newblock {\em Ann. Statist.}, 24(5):1880--1913, 1996.

\bibitem{Liang+Liu:2011}
Xingang Liang and Quansheng Liu.
\newblock Tail behavior of laws stable by random weighted mean.
\newblock {\em C. R. Math. Acad. Sci. Paris}, 349(5-6):347--352, 2011.

\bibitem{Lyons:1997}
Russell Lyons.
\newblock A simple path to {B}iggins' martingale convergence for branching
  random walk.
\newblock In {\em Classical and modern branching processes ({M}inneapolis,
  {MN}, 1994)}, volume~84 of {\em IMA Vol. Math. Appl.}, pages 217--221.
  Springer, New York, 1997.

\bibitem{Mikosch+Samorodnitsky:2000}
Thomas Mikosch and Gennady Samorodnitsky.
\newblock The supremum of a negative drift random walk with dependent
  heavy-tailed steps.
\newblock {\em Ann. Appl. Probab.}, 10(3):1025--1064, 2000.

\bibitem{Olvera-Cravioto:2012}
Mariana Olvera-Cravioto.
\newblock Asymptotics for weighted random sums.
\newblock {\em Adv. Appl. Probab.}, 44(4):1142--1172, 2012.

\bibitem{Pratt:1960}
John~W. Pratt.
\newblock On interchanging limits and integrals.
\newblock {\em Ann. Math. Statist.}, 31:74--77, 1960.

\bibitem{Roesler+al:2002}
U.~R\"osler, V.~A. Topchii, and V.~A. Vatutin.
\newblock The rate of convergence for weighted branching processes [translation
  of {M}at. {T}r. {\bf 5} (2002), no. 1, 18--45].
\newblock {\em Siberian Adv. Math.}, 12(4):57--82, 2003.

\bibitem{Roesler+al:2002b}
Uwe R\"osler, Valentin Topchii, and Vladimir Vatutin.
\newblock Convergence rate for stable weighted branching processes.
\newblock In {\em Mathematics and computer science, {II} ({V}ersailles, 2002)},
  Trends Math., pages 441--453. Birkh\"auser, Basel, 2002.

\bibitem{Shi:2015}
Zhan Shi.
\newblock {\em Branching random walks}, volume 2151 of {\em Lecture Notes in
  Mathematics}.
\newblock Springer, Cham, 2015.
\newblock Lecture notes from the 42nd Probability Summer School held in Saint
  Flour, 2012, \'Ecole d'\'Et\'e de Probabilit\'es de Saint-Flour. [Saint-Flour
  Probability Summer School].

\bibitem{Topchii+Vatutin:1997}
V.~A. Topchii and V.~A. Vatutin.
\newblock Maximum of the critical {G}alton-{W}atson processes and left-continuous random walks.
\newblock {\em Theory Probab. Appl.}, 42:17--27, 1997.
\end{thebibliography}
\end{document}